\numberwithin{equation}{section}
\newtheorem{theorem}{Theorem}[section]
\newtheorem{lemma}[theorem]{Lemma}
\newtheorem{Th}{Theorem}
\newtheorem{corollary}[Th]{Corollary}
\theoremstyle{definition}
\newtheorem{definition}[theorem]{Definition}
\newtheorem{remark}[equation]{Remark}
\newcommand{\F}{\mathcal{F}}
\newcommand{\E}{\mathcal{E}}
\newcommand{\G}{\mathcal{G}}
\renewcommand{\L}{\mathcal{L}}
\newcommand{\tGamma}{\tilde{\Gamma}}
\newcommand{\tK}{\tilde{\mathcal{K}}}
\newcommand{\tn}{\tilde{n}}
\newcommand{\M}{\mathcal{M}}
\newcommand{\N}{\mathcal{N}}
\newcommand{\K}{\mathcal{K}}
\newcommand{\D}{\mathbf{D}}
\renewcommand{\H}{\mathcal{H}}
\newcommand{\Aut}{\operatorname{Aut}}
\newcommand{\ov}{\overline}
\newcommand{\Ac}{\operatorname{A}^\circ}
\newcommand{\One}{\operatorname{\mathbf{1}}}
\newcommand{\W}{\mathbf{W}}
\newcommand{\fN}{\mathfrak{N}}
\newcommand{\X}{{\mathcal{X}}}
\newcommand{\Y}{{\mathcal{Y}}}
\newcommand{\mD}{\mathcal{D}}
\newcommand{\tE}{\widetilde{\E}}
\newcommand{\fS}{\mathfrak{S}}
\def \<{\langle }
\def \>{\rangle }
\renewcommand{\phi}{\varphi}
\newcommand{\subn}{{\unlhd\!\unlhd\;}}
\title[Some products in fusion systems and localities]{Some products in fusion systems and localities}
\author[E.~Henke]{Ellen Henke}
\address{Institut f{\"u}r Algebra, Fakult{\"a}t Mathematik, Technische Universit{\"a}t Dresden, 01062 Dresden, Germany}
\email{ellen.henke@tu-dresden.de}
\begin{document}

\begin{abstract}
The theory of saturated fusion systems resembles in many parts the theory of finite groups. However, some concepts from finite group theory are difficult to translate to fusion systems. For example, products of normal subsystems with other subsystems are only defined in special cases. In this paper the theory of localities is used to prove the following result: Suppose $\F$ is a saturated fusion system over a $p$-group $S$. If $\E$ is a normal subsystem of $\F$ over $T\leq S$, and $\mD$ is a subnormal subsystem of $N_\F(T)$ over $R\leq S$, then there is a subnormal subsystem $\E\mD$ of $\F$ over $TR$, which plays the role of a product of $\E$ and $\mD$ in $\F$. If $\mD$ is normal in $N_\F(T)$, then $\E\mD$ is normal in $\F$. It is shown along the way that the subsystem $\E\mD$ is closely related to a naturally arising product in certain localities attached to $\F$.
\end{abstract}

\maketitle

\section{Introduction}

The theory of localities was developed by Chermak \cite{Chermak:2013, Chermak:2015} and gives an alternative way of looking at linking systems and transporter systems. Here linking systems and transporter systems are categories associated to saturated fusion systems that were defined and studied before by various authors; see \cite{BLO2,BCGLO1,O4,OV1}. Linking systems are particularly important in the study of the homotopy theory of fusion systems. For an introduction to localities the reader is referred to \cite{Chermak:2015} or to the summary in \cite[Sections~3.1-3.5 and 3.7]{Henke:Regular}.

\smallskip

Many concepts and results from finite group theory can be translated both to saturated fusion systems and to a certain class of localities; for saturated fusion systems this is largely due to Aschbacher \cite{Aschbacher:2008, Aschbacher:2011}. It is shown in \cite{Chermak/Henke} that the two theories are closely related. This can be used to reprove known results on fusion systems and, perhaps more importantly, to show some properties of fusion systems that seemed difficult to prove before. The present paper gives a further example of this. 

\smallskip

To explain our approach and some results from \cite{Chermak/Henke} in more detail let us first introduce some definitions. If $(\L,\Delta,S)$ is a locality and $\H$ is a partial subgroup of $\L$, then we can always naturally form the fusion system $\F_{S\cap\H}(\H)$. By definition, this is the fusion system over $S\cap\H$, which is generated by the maps between subgroups of $S\cap\H$ that are conjugation maps by elements of $\H$. The locality $(\L,\Delta,S)$ is said to be a locality \emph{over} $\F$ if $\F=\F_S(\L)$. 

\smallskip

As in \cite{Henke:2015} we call a locality $(\L,\Delta,S)$  a \emph{linking locality} if $\F_S(\L)$ is saturated, $\F_S(\L)^{cr}\subseteq\Delta$ and $N_\L(P)$ is a group of characteristic $p$ for every $P\in\Delta$. Here a finite group $G$ is of characteristic $p$ if $C_G(O_p(G))\leq O_p(G)$. We caution the reader that Chermak \cite{ChermakII,ChermakIII} calls such localities \emph{proper localities} and this terminology is also used in \cite{Chermak/Henke}. If $\F$ is a saturated fusion system over a $p$-group $S$, it is a consequence of the existence and uniqueness of centric linking systems that a linking locality $(\L,\Delta,S)$ over $\F$ always exists (cf. \cite{Chermak:2013,Oliver:2013,Glauberman/Lynd}). Indeed, for any ``suitable'' set $\Delta$  of subgroups of $S$, it is shown in \cite[Theorem~A]{Henke:2015} that there is a linking locality $(\L,\Delta,S)$ over $\F$ with object set $\Delta$. Examples for suitable object sets are the sets $\F^c$, $\F^q$ and $\F^s$ of $\F$-centric, $\F$-quasicentric and $\F$-subcentric subgroups of $S$ respectively; see \cite[Definition~I.3.1, Definition~III.4.5]{Aschbacher/Kessar/Oliver:2011} and \cite[Definition~1]{Henke:2015}. The set of $\F$-subcentric subgroups turns out to be the largest possible object set of a linking locality. Another particularly nice object set is the set $\delta(\F)$ of regular objects, which was first defined by Chermak \cite[p.36]{ChermakIII}. The reader is referred to Subsection~\ref{SS:Regular} for details. A linking locality $(\L,\Delta,S)$ over $\F$ is called \emph{regular} if $\Delta=\delta(\F)$.

\smallskip

Suppose now that $\F$ is a saturated fusion system and $(\L,\Delta,S)$ is a linking locality over $\F$. Write $\fN(\F)$ for the set of normal subsystems of $\F$ and $\fN(\L)$ for the set of partial normal subgroups of $\L$. Chermak and the author of this paper \cite[Theorem~A]{Chermak/Henke} proved that there is a bijection
\[\Psi_\L\colon\fN(\L)\rightarrow \fN(\F)\]
which sends every partial normal subgroup $\N\in\fN(\L)$ to a normal subsystem over $S\cap \N$, which is the smallest normal subsystem of $\F$ containing $\F_{S\cap\N}(\N)$. If $\F^q\subseteq\Delta$ or $\delta(\F)\subseteq \Delta$, then it even turns out that $\Psi_\L$ is given by $\Psi_\L(\N)=\F_{S\cap\N}(\N)$ for all $\N\in\fN(\L)$.

\smallskip

In fact, if $(\L,\Delta,S)$ is a \emph{regular locality} over $\F$ then, writing $\fS(\F)$ for the set of subnormal subsystems of $\F$ and $\fS(\L)$ for the set of partial subnormal subgroups of $\L$, the map
\[\hat{\Psi}_\L\colon \fS(\L)\rightarrow \fS(\F),\H\mapsto \F_{S\cap\H}(\H)\]
is well-defined and a bijection by \cite[Theorem~F]{Chermak/Henke}. Note that $\hat{\Psi}_\L$ restricts to $\Psi_\L$.

\smallskip

The correspondence between normal subsystems and partial normal subgroups is used in \cite[Theorem~C]{Chermak/Henke} to show that, for any two normal subsystems $\E_1,\E_2$ of $\F$, there is a meaningful notion of a product subsystem $\E_1\E_2$. The proof uses that products in localities can be formed very naturally. Namely, if $\X,\Y\subseteq\L$ and $\Pi\colon\D\rightarrow\L$ denotes the partial product on $\L$, we set
\[\X\Y:=\Pi(\X,\Y):=\{\Pi(x,y)\colon x\in\X,\;y\in\Y,\;(x,y)\in\D\}.\]
It turns out that the product of any two partial normal subgroups of a locality is again a partial normal subgroup; see \cite{Henke:2015a}. Because of the existence of the bijection $\Psi_\L$ this makes it possible to define products of normal subsystems of fusion systems.

\smallskip

In the present paper we follow a relatively similar strategy to the one just described, but in a slightly different context. We are interested in the situation that $\E$ is a normal subsystem of $\F$ over a subgroup $T\leq S$ and that $\mD$ is a subnormal subsystem of $N_\F(T)$. If $(\L,\Delta,S)$ is a linking locality over $\F$ as before, then by \cite[Lemma~3.35(b)]{Henke:Regular}, $(N_\L(T),\Delta,S)$ is a linking locality over $N_\F(T)$. Therefore, there is also a bijection
\[\Psi_{N_\L(T)}\colon \fN(N_\L(T))\rightarrow \fN(N_\F(T))\]
with similar properties as mentioned before for $\Psi_\L$. Indeed, assuming that $(\L,\Delta,S)$ is regular, we show in Lemma~\ref{L:NLTRegular} that $(N_\L(T),\delta(N_\F(T)),S)$ is a regular locality, and so the map
\[\hat{\Psi}_{N_\L(T)}\colon \fS(N_\L(T))\rightarrow \fS(N_\F(T)),\H\mapsto \F_{S\cap\H}(\H)\]
is well-defined and a bijection. Thus, in the situation described above, $\E$ corresponds to a partial normal subgroup $\N$ of $\L$ via $\Psi_\L$, and $\mD$ corresponds to a partial subnormal subgroup $\K$ of $N_\L(T)$ via $\hat{\Psi}_{N_\L(T)}$. We use then Theorem~\ref{C:NK} below to conclude that $\N\K$ is a partial subnormal subgroup of $\L$ and corresponds thus to a subnormal subsystem of $\F$ via $\hat{\Psi}_\L$. The proof of Theorem~\ref{T:RegularSubnormal} builds on the following theorem, which holds for arbitrary localities.

\begin{Th}\label{P:Localities}
Let $(\L,\Delta,S)$ be a locality, $\N\unlhd\L$, $T:=S\cap\N$ and $\K\unlhd N_\L(T)$. Then $\N\K$ is a partial normal subgroup of $\L$, $\N\K\cap S=T(\K\cap S)$ and $\N\K=\K\N$. Moreover, for every $g\in\N\K$ the following hold:
\begin{itemize}
\item [(a)] There exist $n\in\N$ and $k\in\K$ with $(n,k)\in\D$, $g=nk$ and $S_g=S_{(n,k)}$.
\item [(b)] There exist $n\in\N$ and $k\in\K$ with $(k,n)\in\D$, $g=kn$ and $S_g=S_{(k,n)}$.
\end{itemize}
\end{Th}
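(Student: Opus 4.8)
The plan is to work directly with the axioms of a locality, exploiting that $\K \unlhd N_\L(T)$ with $T = S\cap\N$ and that $T$ normalizes $\N$. First I would establish the multiplicative structure: given $g \in \N\K$, by definition $g = \Pi(n,k)$ for some word $(n,k)\in\D$ with $n\in\N$, $k\in\K$. The first task is to show $\N\K = \K\N$, and more precisely to upgrade an arbitrary product $nk$ to one where the domains match up, i.e.\ $S_g = S_{(n,k)}$. The key tool here is that $\N$ is a partial normal subgroup, so for $k\in\K\subseteq N_\L(T)$ we have $k^{-1}\N k = \N$ (conjugation by $k$ permutes $\N$, restricted to where it's defined); combined with $T\leq S_k$ (since $k\in N_\L(T)$) this lets me rewrite $nk = k(k^{-1}nk)$ with $k^{-1}nk\in\N$, giving $\N\K\subseteq\K\N$, and symmetrically the reverse inclusion. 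To control the domains I would use the standard locality lemma (from Chermak's foundational results, available via \cite{Chermak:2015} or \cite{Henke:Regular}) that says: if $w=(g_1,\dots,g_r)\in\D$ then $S_w = S_{\Pi(w)}$ can be arranged, or more carefully, one chooses representatives realizing the maximal domain. This is where parts (a) and (b) come from: (a) is the assertion that the "optimal" factorization $g=nk$ has $S_g = S_{(n,k)}$, and (b) is its mirror image obtained by the $\N\K=\K\N$ symmetry.

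The second task is to show $\N\K$ is a partial subgroup: it is closed under inversion since $(nk)^{-1} = k^{-1}n^{-1}$ with $k^{-1}\in\K$, $n^{-1}\in\N$, so $(nk)^{-1}\in\K\N = \N\K$; and it contains $\One$. Closure under multiplication: given $g_1 = n_1 k_1$ and $g_2 = n_2 k_2$ with $(g_1,g_2)\in\D$, I would write $g_1 g_2 = n_1 (k_1 n_2) k_2 = n_1 (k_1 n_2 k_1^{-1}) k_1 k_2$, use $k_1 n_2 k_1^{-1}\in\N$ (normality of $\N$ under $N_\L(T)\ni k_1$), absorb it into the $\N$-part to get $n_1' k_1 k_2$ with $n_1'\in\N$ and $k_1 k_2\in\K$ — but one must check the relevant word lies in $\D$, which is where domain-tracking via parts (a),(b) is essential. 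This is the heart of the argument and the place I expect the main obstacle.

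To prove $\N\K\unlhd\L$, I would verify it is invariant under $\L$-conjugation. For $f\in\L$ and $g = nk\in\N\K$ with $g^f$ defined, I want $g^f = n'k'$ with $n'\in\N$, $k'\in\K$. Since $\N\unlhd\L$, $n^f\in\N$; the trouble is $k^f$ — conjugation by $f$ need not preserve $T$, so $k^f$ need not land in $N_\L(T)$. Here I would use that $k\in\K\unlhd N_\L(T)$ with $\K$ over $R\leq S$ and that $T(\K\cap S) = \N\K\cap S$ is what we're trying to establish; more likely the right move is to use the maximality of $S_{(n,k)} = S_g$ from (a) together with the fact that $\N\K\cap S$ must be a subgroup containing both $T$ and $\K\cap S$, and to run an induction on the subnormal defect of $\K$ in $N_\L(T)$, reducing to the case $\K\unlhd N_\L(T)$ where one can apply the product theorem for two partial normal subgroups (\cite{Henke:2015a}) inside a suitable sublocality. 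The identity $\N\K\cap S = T(\K\cap S)$ I would get by applying part (a): any $s\in\N\K\cap S$ is $nk$ with $S_{(n,k)}=S\ni s$, forcing $n\in\N\cap S = T$ (wait, need $n\in S$: from $S_{(n,k)}=S$ one deduces via the locality axioms that the partial word evaluates through subgroups of $S$, hence $n,k\in N_\L(S)\cap\dots$), and $k\in\K\cap S$, giving $s\in T(\K\cap S)$; the reverse inclusion is clear.

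The main obstacle, concretely, is the domain bookkeeping: every time I "absorb" a conjugate $k n k^{-1}$ into the $\N$-factor or reassociate a product, I must certify that the new word lies in $\D$ and that the domain $S_w$ does not shrink — the clean statements (a) and (b) are precisely designed to make this bookkeeping work, so the real content is proving (a) and (b) by choosing, among all factorizations $g = nk$, one with $S_{(n,k)}$ maximal and showing it equals $S_g$, using that $N_\L(S_g)$ contains a Sylow-like structure and that $T\leq S_g$ whenever $\K\cap S$ is involved.
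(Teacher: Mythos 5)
Your proposal contains the right building blocks for some parts of the proof but has two genuine gaps, both located precisely where you yourself flag ``the main obstacle.''

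The first gap is in the proof of (a). You appeal to a ``standard locality lemma'' to the effect that one can choose a factorization $g=nk$ ``realizing the maximal domain.'' No such general lemma exists: for an arbitrary word $w$ one only has $S_w\leq S_{\Pi(w)}$, and this can be a proper inclusion. The paper proves (a) by a genuinely different and more substantial argument. It first invokes the Frattini Lemma and the Splitting Lemma \cite[Corollary~3.11, Lemma~3.12]{Chermak:2015} to write $g=mf$ with $m\in\N$, $f\in N_\L(T)$ and $S_g=S_{(m,f)}$. A computation with the partial-group axioms then shows $f=(m^{-1}n)k$ with $m^{-1}n\in N_\N(T)$, so $f\in N_\N(T)\K$. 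The key step is that $N_\N(T)$ and $\K$ are both partial normal subgroups of the \emph{same} locality $N_\L(T)$, so the product theorem of \cite{Henke:2015a} applies inside $N_\L(T)$ to refactor $f=n'k'$ with $S_f=S_{(n',k')}$; then $g=(mn')k'$ with $S_g=S_{(mn',k')}$. Your plan omits the Splitting Lemma and, more importantly, omits the reduction into $N_\L(T)$ that makes the known product theorem applicable.

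The second gap is the normality of $\N\K$ in $\L$. You correctly identify the obstruction --- for $f\in\L$, the conjugate $k^f$ need not lie in $N_\L(T)$ --- but your proposed fix does not resolve it. The theorem of \cite{Henke:2015a} is about two partial normal subgroups of the same locality, whereas here $\N\unlhd\L$ while $\K$ is normal only in $N_\L(T)$; that mismatch is exactly why the present theorem is not a corollary of the earlier one, and applying it ``inside a suitable sublocality'' would at best give normality in that sublocality, not in $\L$. There is also no induction on subnormal defect available, since $\K\unlhd N_\L(T)$ is already the base case. The paper sidesteps the conjugation problem entirely: it passes to the quotient $\alpha\colon\L\to\ov\L:=\L/\N$, notes that $\alpha|_{N_\L(T)}$ is a projection onto $\ov\L$ by \cite[Theorem~4.3(b)]{Chermak:2015}, applies Lemma~\ref{L:PartialNormalProjection} to get $\K\alpha\unlhd\ov\L$, and then, since $\N\K=\K\N$ has been shown to be a partial subgroup containing $\ker\alpha=\N$ with image $\K\alpha$, the partial subgroup correspondence \cite[Proposition~4.7]{Chermak:2015} gives $\N\K\unlhd\L$. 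Without this idea (or some substitute), the direct verification of conjugation-invariance you sketch will not close.

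For the remaining pieces your outline is essentially right: $\N\K=\K\N$ and the equivalence of (a) and (b) follow from \cite[Lemma~3.2]{Chermak:2015}; closure under inversion is immediate from $(nk)^{-1}=k^{-1}n^{-1}$; and $\N\K\cap S=T(\K\cap S)$ does follow from (a) by specializing to $g\in N_{\N\K}(S)$ and a Dedekind-type argument in the group $N_\L(S)$. Closure of $\N\K$ under products of arbitrary words is handled in the paper by factoring each letter via (b), concatenating, and using \cite[Lemma~3.4]{Chermak:2015} to sweep all the $\N$-letters to the right in one stroke; your two-letter rearrangement is in the same spirit but would need to be promoted to an argument for words of arbitrary length.
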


While the theorem above appears to be new, a somewhat similar (but slightly weaker) result was shown before by Chermak. Namely, if $(\L,\Delta,S)$ is a linking locality, Proposition~5.5 in \cite{Chermak:2015} states that, in the situation of Theorem~\ref{P:Localities}, $\<\N,\K\>$ is a partial normal subgroup with $\<\N,\K\>\cap S=(\N\cap S)(\K\cap S)$. Theorem~\ref{P:Localities} gives however some more precise information, which is actually needed to prove the theorems below.

\begin{Th}\label{C:NK}
Let $(\L,\Delta,S)$ be a regular locality, $\N\unlhd \L$, $T:=S\cap\N$ and $\K\subn N_\L(T)$. Then $\N\K=\K\N$ is a partial subnormal subgroup of $\L$ and $S\cap\N\K=T(S\cap \K)$.
\end{Th}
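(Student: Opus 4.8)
The plan is to prove Theorem~\ref{C:NK} by induction on the subnormal defect of $\K$ in $N_\L(T)$, using Theorem~\ref{P:Localities} for the base case and the bijections $\hat\Psi_\L$, $\hat\Psi_{N_\L(T)}$ (together with Lemma~\ref{L:NLTRegular}) to control the inductive step. First I would settle the base case: if $\K\unlhd N_\L(T)$ then Theorem~\ref{P:Localities} directly gives that $\N\K=\K\N$ is a partial normal subgroup of $\L$ (in particular partial subnormal) and that $S\cap\N\K=T(S\cap\K)$, so there is nothing more to do. For the inductive step, suppose $\K\subn N_\L(T)$ with defect $d\geq 2$, so there is $\K\leq\M\unlhd N_\L(T)$ with $\K\subn\M$ of defect $d-1$. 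The idea is to first form the partial normal subgroup $\N\M=\M\N\unlhd\L$ with $S\cap\N\M=T(S\cap\M)$ by the base case, then show $\K\subn N_{\N\M}(T')$ for a suitable $T'$, and finally apply the inductive hypothesis inside the smaller regular locality living over $\N\M$.

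The technical heart of the argument is to make this ``relativisation'' precise. Writing $T_1:=S\cap\N\M=T(S\cap\M)$, I would use that $(N_\L(T),\delta(N_\F(T)),S)$ is a regular locality (Lemma~\ref{L:NLTRegular}) and that $\N\M\cap N_\L(T)=\M(\N\cap N_\L(T))$ is a partial normal subgroup of $N_\L(T)$ containing $\K$; since $\M\unlhd N_\L(T)$ and $\K\subn\M$, I get $\K\subn\N\M\cap N_\L(T)\unlhd N_\L(T)$, and then $\K\subn N_{\N\M}(T)$. The point of passing to $\N\M$ is that restriction of a regular locality to a partial normal subgroup is again (essentially) regular over the corresponding normal subsystem — here one invokes \cite[Theorem~F]{Chermak/Henke} and the behaviour of $\delta(-)$ under taking normal subsystems — so $\N\M$ carries a regular locality structure in which $\N$ plays the role of a partial normal subgroup and $\K$ that of a partial subnormal subgroup of the normaliser of $S\cap\N$ inside it. Applying the inductive hypothesis there yields that $\N\K=\K\N$ is partial subnormal in $\N\M$, hence in $\L$ (as $\N\M\unlhd\L$), and gives $S\cap\N\K=T(S\cap\K)$ after unwinding $S\cap\N\M=T_1$.

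An alternative, and perhaps cleaner, route avoids re-deriving a regular structure on $\N\M$: instead, translate everything through the fusion systems. By Theorem~\ref{P:Localities} applied to $\K\leq\M$, $\N\M\unlhd\L$; now $\hat\Psi_\L$ sends $\N$ to a normal subsystem $\E$ of $\F$ over $T$ and sends $\N\M$ to a normal subsystem of $\F$ over $T_1$. Meanwhile $\hat\Psi_{N_\L(T)}$ sends $\M$ to a normal and $\K$ to a subnormal subsystem $\mD$ of $N_\F(T)$ of defect $d-1$ inside $\hat\Psi_{N_\L(T)}(\M)$. One then builds the chain of subnormal subsystems on the fusion-system side and transports it back via the (injective) bijection $\hat\Psi_\L$ to obtain that $\N\K$ is partial subnormal in $\L$. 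The formula $S\cap\N\K=T(S\cap\K)$ is then read off from the fact that $\hat\Psi_\L(\N\K)$ is a subsystem over $T(S\cap\K)$, which in turn follows inductively from the base-case formula in Theorem~\ref{P:Localities} applied at each step of the subnormal chain.

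The main obstacle I expect is the bookkeeping in the inductive step: one has to check that taking the product with $\N$ interacts correctly with passing down a subnormal chain for $\K$ — concretely, that $\N\K\unlhd\N\M$ when $\K\unlhd\M$ and $\M\unlhd N_\L(T)$ — and that the ``base group'' $T$ is preserved throughout, i.e.\ that $\K\subn N_\L(T)$ really does relativise to $\K\subn N_{\N\M}(T)$ without enlarging $T$. This requires care because the normaliser $N_\L(T)$ is taken in $\L$, not in $\N\M$, so one needs $N_{\N\M}(T)=\N\M\cap N_\L(T)$ and that this intersection is computed correctly as a partial group; this is exactly the kind of statement Theorem~\ref{P:Localities} is designed to supply precise control over (via the equalities $S_g=S_{(n,k)}$ in parts (a) and (b)), and it is why that sharper form is needed rather than merely Chermak's Proposition~5.5 in \cite{Chermak:2015}.
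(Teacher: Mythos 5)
Your proposal is a genuinely different strategy from the paper's, but as written it contains a gap in the inductive step.

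You propose to induct on the subnormal defect $d$ of $\K$ in $N_\L(T)$. The base case ($\K\unlhd N_\L(T)$) is indeed Theorem~\ref{P:Localities}. For the inductive step you choose $\K\subn\M\unlhd N_\L(T)$ with $\K$ of defect $d-1$ in $\M$, form $\N\M\unlhd\L$, and then want to apply the inductive hypothesis inside the regular locality carried by $\N\M$ with $\N$ in place of $\N$ and $T=(S\cap\N\M)\cap\N$ unchanged. The problem is that the relevant normalizer inside $\N\M$ is not $\M$: you correctly compute $N_{\N\M}(T)=\N\M\cap N_\L(T)=N_\N(T)\M$ (by the Dedekind Lemma), and there is no reason for $N_\N(T)\subseteq\M$, so in general $\M$ is a \emph{proper} normal partial subgroup of $N_{\N\M}(T)$. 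Consequently the best you can say is that $\K\subn N_{\N\M}(T)$ with defect $\leq d$ (via $\K\subn\M\unlhd N_{\N\M}(T)$), not $<d$. The induction on defect therefore does not close. Passing from $\L$ to $\N\M$ also need not shrink $\L$: if $\N\M=\L$ (which the paper shows is exactly the hard case), you are stuck with the same defect and the same locality. Your alternative ``route through fusion systems'' is circular at this stage: the existence of a meaningful subnormal chain of product subsystems on the fusion-system side is precisely what Theorem~\ref{T:ED} establishes, and its proof rests on Theorem~\ref{C:NK}.

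The paper's proof is a minimal-counterexample argument with respect to $|\L|-|\K|$, combined with choosing a subnormal series $\K=\K_0\unlhd\cdots\unlhd\K_m=N_\L(T)$ of minimal length. Setting $\tK:=\K_{m-1}$, it first shows one may assume $\L=\N\tK$ (otherwise replace $\L$ by the regular locality on $\N\tK$), then works inside the regular locality carried by $\tK$ to replace $\K$ by $\K':=(\N\cap\tK)\K$ and shows $\N\K=\N\K'$; minimality then forces $\K=\K'$, i.e.\ $\N\cap\tK\subseteq\K$. From this absorption property, a conjugation computation (using parts~(a)/(b) of Theorem~\ref{P:Localities} and Lemma~\ref{E:3prime} to control $S_w\cap T^*$) shows $\K_{m-2}\unlhd N_\L(T)$, contradicting the minimality of $m$. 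This two-pronged reduction (shrink $\L$, or absorb $\N\cap\tK$ into $\K$ and then shorten the chain) is exactly what is missing from your induction: you need some mechanism, beyond replacing $\L$ by $\N\M$, to guarantee that either the ambient locality or the chain length actually drops. Your observation that the precise equality $N_{\N\M}(T)=\N\M\cap N_\L(T)$ and the sharp form $S_g=S_{(n,k)}$ from Theorem~\ref{P:Localities} are essential is correct; the paper uses both, but embeds them in the more intricate reduction just described.
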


The main results for fusion systems are summarized in the following theorem.

\begin{Th}\label{T:ED}
Let $\F$ be a saturated fusion system over $S$, let $\E$ be a normal subsystem of $\F$ over $T$, and let $\mD$ be a subnormal subsystem of  $N_\F(T)$ over $R$. Then there exists a subsystem $\E\mD=(\E\mD)_\F$ of $\F$ such that the following hold:
\begin{itemize}
 \item [(a)] $\E\mD$ is a subnormal subsystem of $\F$ over $TR$ with $\E\unlhd\E\mD$ and $\mD\subn N_{\E\mD}(T)$. Indeed, $\E\mD$ is the smallest subnormal subsystem of $\F$ with these properties.
 \item [(b)] If $\mD$ is normal in $N_\F(T)$, then $\E\mD$ is normal in $\F$ and $\mD$ is normal in $N_{\E\mD}(T)$.
 \item [(c)] Let $\tE$ be a subnormal subsystem of $\F$ with $\E\subn\tE$ and $\mD\subn N_{\tE}(T)$. Then $\E\unlhd\tE$, $\E\mD\subn\tE$ and $\mD\subn N_{\tE}(T)$. If $\mD\unlhd N_\F(T)$, then $\mD\unlhd N_{\tE}(T)$.
 \item [(d)] $N_{\E\mD}(T)=(N_\E(T)\mD)_{N_\F(T)}$.
 \item [(e)] Suppose $(\L,\Delta,S)$ is a regular locality over $\F$. Let $\Psi_\L\colon\fN(\L)\rightarrow\fN(\F)$ and $\hat{\Psi}_{N_\L(T)}\colon N_\L(T)\rightarrow \fN(N_\F(T))$ be the maps from above given by Theorem~A and Theorem~F in \cite{Chermak/Henke}. Set $\N:=\Psi_\L^{-1}(\E)$ and $\K:=\hat{\Psi}_{N_\L(T)}^{-1}(\mD)$. Then $TR=(\N\K)\cap S$, $\N\K\subn\L$ and $\Psi_\L(\N\K)=\E\mD$.
\end{itemize}
\end{Th}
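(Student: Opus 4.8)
The plan is to transport the whole statement into a regular locality and read it off from Theorems~\ref{P:Localities} and~\ref{C:NK}. Fix a regular locality $(\L,\Delta,S)$ over $\F$ (such localities exist; see \cite{ChermakIII} and \cite{Henke:Regular}), so $\Delta=\delta(\F)$. Since $\E$ is over $T$ we have $T\unlhd S$, hence $S\leq N_\L(T)$, and by Lemma~\ref{L:NLTRegular} the triple $(N_\L(T),\delta(N_\F(T)),S)$ is again a regular locality, now over $N_\F(T)$. Put $\N:=\Psi_\L^{-1}(\E)$ and $\K:=\hat{\Psi}_{N_\L(T)}^{-1}(\mD)$. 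Since $\delta(\F)\subseteq\Delta$ we have $\Psi_\L(\N)=\F_{S\cap\N}(\N)=\E$ and $\hat{\Psi}_{N_\L(T)}(\K)=\F_{S\cap\K}(\K)=\mD$, so $S\cap\N=T$ and $S\cap\K=R$; thus the hypotheses of Theorem~\ref{C:NK} hold and give $\N\K=\K\N\subn\L$ with $S\cap\N\K=T(S\cap\K)=TR$, and Theorem~\ref{P:Localities} shows moreover that if $\mD\unlhd N_\F(T)$ then $\K\unlhd N_\L(T)$ and $\N\K\unlhd\L$. Now \emph{define} $(\E\mD)_\F:=\hat{\Psi}_\L(\N\K)$, a subnormal subsystem of $\F$ over $TR$ which is normal in $\F$ when $\mD\unlhd N_\F(T)$. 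Part~(e) then follows (reading $\Psi_\L$ as $\hat{\Psi}_\L$ where necessary).

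For (a)--(d) I would use throughout that $\hat{\Psi}_\L$, $\hat{\Psi}_{N_\L(T)}$ and the compatibility results of \cite{Chermak/Henke} translate inclusion, the relations $\unlhd$ and $\subn$, and the operation $N_{(-)}(T)$ between localities and fusion systems, so that everything reduces to elementary arguments inside $\L$. Concretely: $\N\unlhd\L$ with $\N\subseteq\N\K$ gives $\N\unlhd\N\K$, whence $\E\unlhd\E\mD$; since $\K\subseteq N_{\N\K}(T)=N_\L(T)\cap\N\K$, and $\K\subn N_\L(T)$, and subnormality is inherited by intersection with a partial subgroup (intersect a subnormal series with $N_\L(T)\cap\N\K$), we get $\K\subn N_{\N\K}(T)$, whence $\mD\subn N_{\E\mD}(T)$; if moreover $\K\unlhd N_\L(T)$, the same intersection argument yields $\K\unlhd N_{\N\K}(T)$ and, with $\N\K\unlhd\L$, proves~(b). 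For the minimality in~(a) and for~(c), given $\tE\subn\F$ with $\E\subn\tE$ and $\mD\subn N_{\tE}(T)$, put $\M:=\hat{\Psi}_\L^{-1}(\tE)\subn\L$; from $\E\subseteq\tE$ we get $\N\subseteq\M$, so $\N\unlhd\M$ (as $\N\unlhd\L$), i.e.\ $\E\unlhd\tE$; from $\mD\subn N_{\tE}(T)$ we get $\K\subn N_\M(T)\subseteq\M$, hence $\N\K\subseteq\M$; since $\N\K\subn\L$ (Theorem~\ref{C:NK}) and a partial subnormal subgroup of $\L$ contained in $\M$ is subnormal in $\M$, we conclude $\N\K\subn\M$, i.e.\ $\E\mD\subn\tE$; intersecting with $\M$ likewise upgrades $\K\unlhd N_\L(T)$ to $\K\unlhd N_\M(T)$, which gives the final assertion of~(c). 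Minimality in~(a) is the case in which $\tE$ is any subnormal subsystem with the properties listed there.

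Part~(d) is the most computational. Since $N_\E(T)\unlhd N_\F(T)$ and $\mD\subn N_{N_\F(T)}(T)=N_\F(T)$, applying the construction above to the triple $(N_\F(T),N_\E(T),\mD)$ and the regular locality $(N_\L(T),\delta(N_\F(T)),S)$ shows that $(N_\E(T)\mD)_{N_\F(T)}=\hat{\Psi}_{N_\L(T)}(\N'\K)$, where $\N'=\Psi_{N_\L(T)}^{-1}(N_\E(T))$ and the product is formed inside $N_\L(T)$. One checks $\N'=N_\N(T)=\N\cap N_\L(T)$ (because $N_{(-)}(T)$ commutes with passage to the associated fusion system, so $N_\E(T)=\F_T(N_\N(T))$), and, using $T\unlhd TR$, that $N_{\E\mD}(T)=\hat{\Psi}_{N_\L(T)}(N_\L(T)\cap\N\K)$. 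So~(d) reduces to the identity
\[N_\L(T)\cap\N\K=(\N\cap N_\L(T))\K\]
of partial subgroups of $N_\L(T)$. The inclusion $\supseteq$ is clear; for $\subseteq$, take $g\in N_\L(T)\cap\N\K$ and write $g=nk$ with $n\in\N$, $k\in\K$, $(n,k)\in\D$ and $S_g=S_{(n,k)}$ as in Theorem~\ref{P:Localities}(a); then $T\leq S_g=S_{(n,k)}\leq S_n$, so ${}^nT$ is defined and equals ${}^g({}^{k^{-1}}T)={}^gT=T$, forcing $n\in\N\cap N_\L(T)$ and hence $g\in(\N\cap N_\L(T))\K$. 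This is precisely where the added precision of Theorem~\ref{P:Localities} (the equality $S_g=S_{(n,k)}$), as opposed to merely $\<\N,\K\>\cap S=(\N\cap S)(\K\cap S)$, is used. Finally, the minimality statement in~(a) shows that $(\E\mD)_\F$ does not depend on the chosen regular locality, which justifies the notation.

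I expect the main difficulty to be not any single computation but the bookkeeping with the several correspondences: one must make sure that $\hat{\Psi}_\L$, $\hat{\Psi}_{N_\L(T)}$ and the restrictions of these maps to the subnormal subsystems of $\tE$, $\E\mD$ and $N_\F(T)$ fit together consistently, and in particular that ``taking $N_{(-)}(T)$'' on the fusion-system side corresponds to ``intersecting with $N_\L(T)$'' on the locality side; these facts are essentially contained in \cite{Chermak/Henke} and \cite{Henke:Regular} but must be assembled with care, while the identity of partial subgroups in part~(d) is the one genuinely new ingredient, for which the extra precision of Theorem~\ref{P:Localities} is designed.
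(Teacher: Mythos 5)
Your proposal is correct and follows essentially the same strategy as the paper: pass to a regular locality $(\L,\Delta,S)$, set $\N:=\Psi_\L^{-1}(\E)$ and $\K:=\hat\Psi_{N_\L(T)}^{-1}(\mD)$, apply Theorems~\ref{P:Localities} and \ref{C:NK} to get $\N\K\subn\L$ with the right intersection with $S$, define $\E\mD:=\hat\Psi_\L(\N\K)$, and then read off (a)--(c) via the order/normality-preserving compatibility of the $\hat\Psi$-maps (Proposition~7.1 of \cite{Chermak/Henke}) together with the translation of $N_{(-)}(T)$ to $(-)\cap N_\L(T)$ (Lemma~\ref{L:NET}); (e) follows from the resulting independence of the choice of regular locality. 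The one place where you genuinely deviate is part (d): you reduce it to the identity $N_\L(T)\cap\N\K=N_\N(T)\K$ and then re-derive this from the decomposition $g=nk$ with $S_g=S_{(n,k)}$ in Theorem~\ref{P:Localities}(a), whereas the paper obtains the same identity in one stroke from the Dedekind Lemma for partial groups \cite[Lemma~1.10]{Chermak:2015}. Both arguments work; yours is more self-contained but slightly overstates the role of the precision in Theorem~\ref{P:Localities}(a), since part (d) can be dispatched by Dedekind alone (the precision is what powers Theorem~\ref{C:NK}, Lemma~\ref{L:RestrictionLocality}, and the description of $\E\mD$ via the $\Ac$-generators, rather than this particular step).
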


We write here $(\E\mD)_\F$ instead of $\E\mD$ if we want to emphasize that the product subsystem depends not only on $\E$ and $\mD$, but also on $\F$ (cf. Remark~\ref{R:Example}). For the statement of (d) note that $N_\E(T)\unlhd N_\F(T)$ and $\mD\subn N_\F(T)=N_{N_\F(T)}(T)$, so the product subsystem $(N_\E(T)\mD)_{N_\F(T)}$ is defined by the preceding claims in Theorem~\ref{T:ED}. As we state in the following corollary, the statement in (e) can be generalized to arbitrary linking localites if $\mD$ is normal in $N_\F(T)$.

\begin{corollary}\label{C:EDNormalLocalities}
Assume the hypothesis of Theorem~\ref{T:ED}.
Suppose $(\L,\Delta,S)$ is a linking locality over $\F$. Let $\Psi_\L\colon\fN(\L)\rightarrow\fN(\F)$ and $\Psi_{N_\L(T)}\colon N_\L(T)\rightarrow \fN(N_\F(T))$ be the maps from above given by \cite[Theorem~A]{Chermak/Henke}. Set $\N:=\Psi_\L^{-1}(\E)$ and $\K:=\Psi_{N_\L(T)}^{-1}(\mD)$. Then $TR=(\N\K)\cap S$, $\N\K\unlhd\L$ and $\Psi_\L(\N\K)=\E\mD$ is the product subsystem from Theorem~\ref{T:ED}.
\end{corollary}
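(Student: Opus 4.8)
The plan is to deduce the corollary from Theorem~\ref{P:Localities} together with Theorem~\ref{T:ED}(e), bridging the gap between an arbitrary linking locality and a regular one by the naturality of partial normal subgroups (and their products) under restriction of linking localities. First, since $\mD\unlhd N_\F(T)$ is in particular subnormal in $N_\F(T)$, Theorem~\ref{T:ED} applies, and part~(b) shows that $\E\mD$ exists and is normal in $\F$ over $TR$. As $\Psi_\L$ sends $\N$ to a normal subsystem of $\F$ over $S\cap\N$ and $\E$ is over $T$, we get $S\cap\N=T$; since $T$ is strongly closed in $\F$ we have $T\unlhd S$, so $S$ is the maximal $p$-subgroup of the linking locality $(N_\L(T),\Delta,S)$ over $N_\F(T)$ (by \cite[Lemma~3.35(b)]{Henke:Regular}), and the same argument gives $S\cap\K=R$. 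Now Theorem~\ref{P:Localities}, applied to $(\L,\Delta,S)$, $\N\unlhd\L$, $T=S\cap\N$ and $\K\unlhd N_\L(T)$, immediately yields that $\N\K=\K\N$ is a partial normal subgroup of $\L$ with $\N\K\cap S=T(\K\cap S)=TR$. It remains to prove $\Psi_\L(\N\K)=\E\mD$.

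For this I would reduce to the regular case. Recall from Theorem~\ref{T:ED}(e) that, for a regular locality $(\L^\delta,\delta(\F),S)$ over $\F$ with $\N^\delta=\Psi_{\L^\delta}^{-1}(\E)$ and $\K^\delta=\hat{\Psi}_{N_{\L^\delta}(T)}^{-1}(\mD)$, one has $\E\mD=\Psi_{\L^\delta}(\N^\delta\K^\delta)$; moreover $\hat{\Psi}_{N_{\L^\delta}(T)}^{-1}(\mD)=\Psi_{N_{\L^\delta}(T)}^{-1}(\mD)$ because $\mD$ is normal in $N_\F(T)$ and $\hat\Psi$ restricts to $\Psi$. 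I would then pick a suitable object set $\Gamma$ containing both $\Delta$ and $\delta(\F)$ — for instance $\Gamma=\F^s$ — and a linking locality $(\L^*,\Gamma,S)$ over $\F$; by uniqueness of linking localities with a prescribed object set we may take $\L=\L^*|_\Delta$, $\L^\delta=\L^*|_{\delta(\F)}$, and correspondingly $N_\L(T)=N_{\L^*}(T)|_\Delta$, etc. Using the naturality of $\Psi$ (and of $\hat\Psi$, over $N_\F(T)$) under restriction of linking localities — i.e. that for $\N^*\unlhd\L^*$ one has $\N^*\cap\L\unlhd\L$ with $\Psi_\L(\N^*\cap\L)=\Psi_{\L^*}(\N^*)$ — we obtain, writing $\N^*=\Psi_{\L^*}^{-1}(\E)$ and $\K^*=\Psi_{N_{\L^*}(T)}^{-1}(\mD)$, that $\N=\N^*\cap\L$, $\K=\K^*\cap N_\L(T)$, and similarly for $\N^\delta$ and $\K^\delta$.

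It then suffices to see that restriction commutes with products, i.e. $(\N^*\K^*)\cap\L=\N\K$ (and likewise with $\L^\delta$): granting this, $\Psi_\L(\N\K)=\Psi_{\L^*}(\N^*\K^*)=\Psi_{\L^\delta}(\N^\delta\K^\delta)=\E\mD$ and we are done. One inclusion is immediate. For the other, let $g\in(\N^*\K^*)\cap\L$; by Theorem~\ref{P:Localities}(a) in $\L^*$ there are $n\in\N^*$ and $k\in\K^*$ with $(n,k)\in\D$, $g=nk$ and $S_g=S_{(n,k)}$. Since $g\in\L=\L^*|_\Delta$ we have $S_g\in\Delta$, and as $S_n\geq S_{(n,k)}=S_g$ while $S_k\geq(S_g)^n$, which is $\F$-conjugate to $S_g$, the closure of $\Delta$ under overgroups and $\F$-conjugacy forces $S_n,S_k\in\Delta$; hence $n\in\N^*\cap\L=\N$, $k\in\K^*\cap N_\L(T)=\K$, and $(n,k)$ is a $\Delta$-sequence, so $g=nk\in\N\K$. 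The same argument works with $\delta(\F)$ in place of $\Delta$.

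\textbf{Main obstacle.} The structural statements are routine once Theorem~\ref{P:Localities} is available; the substance is the comparison with the regular case. One must invoke (or reprove) the naturality of $\Psi$ and $\hat\Psi$ under restriction of linking localities, both over $\F$ and over $N_\F(T)$ — the latter also requiring Lemma~\ref{L:NLTRegular} and the relation between $\delta(\F)$ and $\delta(N_\F(T))$ — and one must check that restriction of partial normal subgroups commutes with products, for which Theorem~\ref{P:Localities}(a) is precisely the tool that makes the decomposition $g=nk$ survive restriction. I expect the object-set bookkeeping around $N_\L(T)$ to be the fussiest point; a more self-contained alternative, verifying directly that $\Psi_\L(\N\K)$ satisfies the minimality characterization of Theorem~\ref{T:ED}(a)--(b), appears to need noticeably more of the structure theory of normal subsystems.
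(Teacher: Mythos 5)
Your proposal is correct and follows essentially the same route as the paper's proof: establish $\N\K\unlhd\L$ and $\N\K\cap S=TR$ directly from Theorem~\ref{P:Localities}, then reduce the identification $\Psi_\L(\N\K)=\E\mD$ to the regular case of Theorem~\ref{T:ED}(e) by realizing $\L$ and a regular locality $\L_\delta$ as restrictions of a single linking locality $\L^s$ with object set $\F^s$, using the naturality of $\Psi$ under restriction (\cite[Theorem~5.14(c)]{Chermak/Henke}, together with \cite[Lemma~2.23(b)]{Henke:2020} for $N_\L(T)$) and the compatibility of products with restriction, which is exactly what the decomposition in Theorem~\ref{P:Localities}(a) provides. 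The paper packages the last two points as Lemmas~\ref{L:RestrictionLocality} and \ref{L:RestrictionPsiL}, but the content is the same as what you outline.
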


If $\mD$ is normal in $N_\F(T)$, then we can also give a more concrete description of the product subsystem $\E\mD$. For this we refer the reader to Theorem~\ref{T:ED0}(e).

\section{The proof of Theorem~\ref{P:Localities} and further results on localities}\label{S:Localities}

\subsection{Basic background}

We adapt the terminology and notation of \cite{Chermak:2015} and of \cite{Henke:2015}, which is summarized in \cite[Sections~3.1-3.5]{Henke:Regular}. The reader is referred to these sources for an introduction to partial groups and localities. However, we recall some of the notations and basic results now and along the way.

\smallskip

Suppose $\L$ is a partial group with product $\Pi\colon \D\rightarrow\L$. By $\W(\L)$ we denote the set of words in $\L$. Moreover,  for every $w=(f_1,f_2,\dots,f_n)\in\D$, the product $\Pi(w)$ is also denoted by $f_1f_2\cdots f_n$.

\smallskip

Assume now in addition that $(\L,\Delta,S)$ is a locality. By $S_f$ we denote the set of all $s\in S$ such that $s^f$ is defined and an element of $S$. More generally, if $w=(f_1,\dots,f_n)\in\W(\L)$, then $S_w$ is the set of all $s\in S$ for which there exist $s=s_0,s_1,\dots,s_n\in S$ such that  $s_{i-1}^{f_i}$ is defined and equal to $s_i$ for all $i=1,\dots,n$. We will frequently use that, by \cite[Corollary~2.6]{Chermak:2015}, for every $w\in\W(\L)$, $S_w$ is a subgroup of $S$ and the following equivalence holds:
\begin{equation}\label{E:1}
 S_w\in\Delta\mbox{ if and only if }w\in\D.
\end{equation}
As $f\in\D$ for every $f\in\L$, it follows in particular that $S_f\in\Delta$ for every $f\in\L$. If $P\in\Delta$ with $P\leq S_w$, then we say that $w\in\D$ via $P$.

\smallskip

We will also use the following property which is a consequence of \cite[Lemma~2.3(c)]{Chermak:2015}:
\begin{equation}\label{E:2}
 S_w\leq S_{\Pi(w)}\mbox{ and }(\cdots(S_w^{f_1})^{f_2}\cdots)^{f_n}=S_w^{\Pi(w)}\mbox{ for every }w=(f_1,\dots,f_n)\in\D.
\end{equation}
Using the axioms of a partial group, one sees moreover that
\begin{equation}\label{E:3}
v\circ (\One)\in\D\mbox{ and }\Pi(v)=\Pi(v\circ\emptyset)=\Pi(v\circ(\One))\mbox{ for every }v\in\D. 
\end{equation}

\subsection{The proof of Theorem~\ref{P:Localities}}

\begin{lemma}\label{L:Decompose}
Let $(\L,\Delta,S)$ be a locality with partial product $\Pi\colon\D\rightarrow\L$, $\N\unlhd\L$, $T:=S\cap\N$ and $\K\unlhd N_\L(T)$. Then for every $g\in\N\K$, there exist $n\in\N$ and $k\in\K$ with $(n,k)\in\D$, $g=nk$ and $S_g=S_{(n,k)}$.
\end{lemma}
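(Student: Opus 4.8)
The plan is to unwind the definition of the product set $\N\K$ and then promote any given expression $g = \Pi(n,k)$ with $(n,k)\in\D$ to one that realizes $S_g$ exactly. By definition of the partial product $\X\Y = \Pi(\X,\Y)$, every $g\in\N\K$ can be written as $g = \Pi(n_0, k_0)$ for some $n_0\in\N$, $k_0\in\K$ with $(n_0,k_0)\in\D$. By \eqref{E:2} we always have $S_{(n_0,k_0)} \leq S_g$, so the content is the reverse inclusion: we need to adjust $n_0, k_0$ so that the decomposition is "efficient", i.e.\ defined on all of $S_g$.

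First I would take an arbitrary $s \in S_g$, so that $s^g \in S$ is defined, and aim to show $s \in S_{(n,k)}$ for a suitable choice of $n\in\N$, $k\in\K$ with $g = nk$. The natural move is to set $P := S_g \in \Delta$ (this lies in $\Delta$ because $S_g = S_f$ for $f = g$ viewed as a length-one word, using \eqref{E:1}) and to try to conjugate $n_0, k_0$ by an element of $N_\L(P)$ or otherwise replace the word $(n_0, k_0)$ by a $\D$-equivalent word $(n,k)$ with $P \leq S_{(n,k)}$. More concretely: since $(n_0, k_0) \in \D$, the word is defined via some $Q \in \Delta$; the strategy is to replace $Q$ by $P$. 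Because $g = \Pi(n_0,k_0)$ and $P = S_g$, the element $s$ with $s^g$ defined should factor as $s \xrightarrow{n_0} s^{n_0} \xrightarrow{k_0} s^g$ provided $s^{n_0}$ makes sense and lands appropriately; the obstruction is precisely that $s^{n_0}$ need not be defined, or $s^{n_0}$ need not lie in $S_{k_0}$. One then uses the normality of $\N$ in $\L$ (so $\N$-conjugation and the "chain" structure of $\D$ behave well) to rechoose $n_0$ within its coset, and the normality of $\K$ in $N_\L(T)$ together with $T = S\cap\N$. A clean way to organize this: apply the axiom \eqref{E:1}/\eqref{E:3} to the word $w = (n_0, k_0, \One)$ or insert $T$-factors, using that $T^{n_0} = T$ fails in general but $T \trianglelefteq N_\L(T)$ constrains $k_0$; I expect the right technical tool to be the "splitting" lemmas for localities from \cite{Chermak:2015} (of the type: any product can be recomputed through a larger object subgroup after conjugating one factor by a suitable element of $S$), combined with the fact that $\N\unlhd\L$ implies $\N$ is closed under $S$-conjugation.

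The key steps in order: (1) write $g = \Pi(n_0, k_0)$ from the definition of $\N\K$; (2) set $P = S_g \in \Delta$ and fix $s\in S_g$; (3) produce, using the partial-group axioms and normality of $\N$, an element $n\in\N$ with $g = nk$ for some $k\in\K$ such that $s^n$ is defined and $s^n \in S_k$ — this is where one conjugates $n_0$ by an element of $S$ (or of $N_\L(P)$) moving the "intermediate" point into place, invoking that $\N$ and $\K$ are stable under the relevant conjugations because $\N\unlhd\L$ and $T = S\cap\N \unlhd N_\L(T) \ni k_0$; (4) conclude $s \in S_{(n,k)}$, hence $S_g \leq S_{(n,k)}$, and with \eqref{E:2} get equality; (5) note the choice of $n, k$ can be made uniformly in $s$ (or pass to a finite intersection / use that $S_g$ is a finitely generated group and a single "generic" conjugating element works) so that one decomposition serves all of $S_g$.

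The main obstacle I anticipate is step (3)–(5): arranging a \emph{single} pair $(n,k)$ that works simultaneously for every $s\in S_g$, rather than an $s$-dependent decomposition. I expect this to be handled by choosing the conjugating element to normalize $P = S_g$ itself (so that it moves the whole subgroup $P$ consistently), using that $N_\L(P)$ acts and that, in a locality, a word is in $\D$ via $P$ as soon as $P$ is contained in the iterated preimage — so it suffices to make the \emph{first} transition $P \xrightarrow{n} P'$ land inside $S_k$, which one can force by picking $n$ in the correct $\N\cap N_\L(P)$-coset. The normality hypotheses $\N\unlhd\L$ and $\K\unlhd N_\L(T)$ are exactly what guarantee these cosets are nonempty and that the adjusted factors stay in $\N$ and $\K$ respectively; the compatibility with $T = S\cap\N$ ensures the intermediate group is controlled. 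Everything else (that $\N\K = \K\N$, that $\N\K\cap S = T(\K\cap S)$, the symmetric statement (b)) will follow formally afterward, but this lemma isolates the one genuinely delicate point.
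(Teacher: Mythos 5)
Your plan correctly identifies the one hard point (producing a \emph{single} pair $(n,k)$ with $S_g = S_{(n,k)}$, not an $s$-dependent family), but the mechanism you propose for overcoming it does not work, and the actual argument is structured quite differently.

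The central problem with ``rechoosing $n_0$ within its coset'' or ``conjugating $n_0$ by an element of $S$ or $N_\L(P)$'' is that any adjustment to the first factor forces a compensating adjustment to the second, and that compensation need not stay in $\K$. Concretely, if you replace $n_0$ by $n = n_0 t$, then to keep $g = nk$ you must take $k = t^{-1}k_0$; for $n\in\N$ you need $t\in\N$, but for $k\in\K$ you also need $t\in\K$, so in effect you may only adjust by elements of $\N\cap\K$ — you are not free to conjugate $n_0$ by arbitrary elements of $S$ or $N_\L(P)$. Moreover, conjugating $n_0$ by something in $\N\cap N_\L(P)$ does nothing useful: it leaves $S_{n_0}$ essentially unchanged, and the obstruction you are trying to remove is precisely that the intermediate image of $S_g$ under the first step need not land in $S_{k_0}$.

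The paper's proof takes a different route. It does \emph{not} try to enlarge the domain of the initial word $(n_0,k_0)$ by conjugation. Instead, it applies the Frattini Lemma and Splitting Lemma (for the partial normal subgroup $\N\unlhd\L$) to produce from scratch a factorization $g = mf$ with $m\in\N$, $f\in N_\L(T)$, and $S_g = S_{(m,f)}$; the maximal-domain property is built into the Splitting Lemma, not engineered afterwards. The nontrivial content is then a careful partial-group computation showing that this $f$ lies in $N_\N(T)\K$ (essentially by cancelling $g$ against $g^{-1}$ in a longer word and reading off $f = (m^{-1}n_0)k_0$). Finally, since $N_\N(T)$ and $\K$ are both partial normal subgroups of the locality $(N_\L(T),\Delta,S)$, the already-known product theorem for two partial normal subgroups (\cite[Theorem~1]{Henke:2015a}) is invoked \emph{inside} $N_\L(T)$ to re-split $f = n'k'$ with $S_f = S_{(n',k')}$, and the pieces are reassembled as $g = (mn')k'$. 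In short: the single-pair problem is solved by bootstrapping from the known two-partial-normal-subgroup case in the smaller locality $N_\L(T)$, after using the Splitting Lemma to reduce to a factor in $N_\L(T)$; it is not solved by $S$-conjugation of the original factors. Your outline does gesture at the Splitting Lemma, but mischaracterizes what it says and does not supply the reduction to $N_\N(T)\K$ nor the appeal to the product theorem inside $N_\L(T)$, which are the two load-bearing steps.
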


\begin{proof}
Let $g\in\N\K$. Then there exist $n\in\N$ and $k\in\K$ with $(n,k)\in\D$ and $g=nk$. By the Frattini Lemma and the Splitting Lemma \cite[Corollary~3.11, Lemma~3.12]{Chermak:2015}, there exist $m\in\N$ and $f\in N_\L(T)$ such that $g=mf$ and $S_g=S_{(m,f)}$. Then the first part of \eqref{E:2} yields
\[P:=S_{(n,k)}\leq S_g=S_{(m,f)}.\]
Notice that $P\in\Delta$ by \eqref{E:1} since $(n,k)\in\D$. Hence, using \eqref{E:1} and the second part of \eqref{E:2}, we see that  $u:=(f,f^{-1},m^{-1},n,k)\in\D$ via $P^m$. By \cite[Lemma~1.4(f)]{Chermak:2015} we have $g^{-1}=(mf)^{-1}=f^{-1}m^{-1}$. Hence, it follows from the axioms of a partial group and \eqref{E:3} that we can make the following computations, where the product is defined on all relevant words: 
\[\Pi(u)=\Pi(f,f^{-1}m^{-1},nk)=\Pi(f,g^{-1},g)=\Pi(f,\One)=f\]
and
\[\Pi(u)=\Pi(\One,m^{-1},n,k)=m^{-1}nk=(m^{-1}n)k.\]
Hence, $(m^{-1},n,k)\in\D$ and $f=\Pi(u)=m^{-1}nk=(m^{-1}n)k$. It follows now from \cite[Lemma~1.4(d)]{Chermak:2015} that $v=(m^{-1},n,k,k^{-1})\in\D$. Using that $f,k\in N_\L(T)$, we can therefore conclude that 
\[m^{-1}n=\Pi(v)=(m^{-1}nk)k^{-1}=fk^{-1}\in N_\L(T).\]
As $m,n\in\N$, it follows $m^{-1}n\in N_\N(T)$ and thus $f=(m^{-1}n)k\in N_\N(T)\K$. Notice $N_\N(T)\unlhd N_\L(T)$. As $(N_\L(T),\Delta,S)$ is a locality by \cite[Lemma~2.12]{Chermak:2015}, it follows thus from \cite[Theorem~1]{Henke:2015a} that there exist $n'\in N_\N(T)$ and $k'\in\K$ such that $(n',k')\in\D$, $f=n'k'$ and $S_f=S_{(n',k')}$. Now $S_g=S_{(m,f)}=S_{(m,n',k')}$ and so $(m,n',k')\in\D$ by \eqref{E:1}. Therefore $g=mf=\Pi(m,n',k')=(mn')k'$ where $mn'\in\N$ and $k'\in\K$. Moreover, using \eqref{E:2}, one observes that $S_g=S_{(m,n',k')}\leq S_{(mn',k')}\leq S_{(mn')k'}=S_g$ and thus $S_g=S_{(mn',k')}$. So the assertion holds with $mn'$ and $k'$ in the roles of $n$ and $k$.
\end{proof}

Given two partial groups $\L$ and $\L'$ with products $\Pi\colon\D\rightarrow\L$ and $\Pi'\colon \D'\rightarrow\L'$ respectively, a \emph{projection} from $\L$ to $\L'$ is a map $\beta\colon \L\rightarrow \L'$ such that
\[\D'=\{(f_1\beta,\dots,f_n\beta)\colon (f_1,\dots,f_n)\in\D\}\]
and $\Pi'(f_1\beta,\dots,f_n\beta)=\Pi(f_1,\dots,f_n)\beta$ for every word $(f_1,\dots,f_n)\in\D$ (cf. \cite[Definition~4.4]{Chermak:2015}). As $\L'\subseteq\D'$, every  projection $\beta\colon\L\rightarrow\L'$ is surjective.

\begin{lemma}\label{L:PartialNormalProjection}
Let $(\L,\Delta,S)$ and $(\ov{\L},\ov{\Delta},S)$ be localities and let $\beta\colon \L\rightarrow \ov{\L}$ be a projection. Then $\M\beta\unlhd\ov{\L}$ for every partial normal subgroup $\M$ of $\L$.
\end{lemma}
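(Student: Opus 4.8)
The plan is to show that the image $\M\beta$ is a partial subgroup of $\ov{\L}$, that it is normalized by the $\ov{\L}$-conjugation action, and that it is ``closed'' under $\ov{\L}$-conjugation in the precise sense required by the definition of a partial normal subgroup. First I would record that the image of a partial subgroup under a projection is again a partial subgroup: since $\beta$ is a projection, words in $\M\beta$ that lie in $\ov{\D}$ are precisely images of words in $\M$ lying in $\D$ (using surjectivity of $\beta$ together with the defining property of $\ov{\D}$), and $\Pi$ carries such words into $\M$ because $\M$ is a partial subgroup; applying $\beta$ then lands the $\ov{\Pi}$-product in $\M\beta$. Inversion is handled the same way via the length-one / length-two words, so $\One_{\ov{\L}}=\One_\L\beta\in\M\beta$ and $\M\beta$ is closed under inversion and multiplication wherever $\ov{\Pi}$ is defined.

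Next I would verify the normality condition. Let $\ov{g}\in\ov{\L}$ and $\ov{m}\in\M\beta$ with $(\ov{g}^{-1},\ov{m},\ov{g})\in\ov{\D}$; I must show $\ov{m}^{\ov{g}}\in\M\beta$. By surjectivity of $\beta$ choose $g\in\L$ with $g\beta=\ov{g}$ and $m\in\M$ with $m\beta=\ov{m}$. The key point is that the word $(g^{-1},m,g)$ need not a priori lie in $\D$ merely because its $\beta$-image does; however, the defining property of a projection says exactly that $\ov{\D}$ consists of the $\beta$-images of words in $\D$, so from $(\ov{g}^{-1},\ov m,\ov g)=(g^{-1}\beta,m\beta,g\beta)\in\ov{\D}$ one can pick \emph{some} word $(g_1,m_1,g_2)\in\D$ with $g_1\beta=\ov g^{-1}$, $m_1\beta=\ov m$, $g_2\beta=\ov g$. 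Here one must be slightly careful: I would instead choose a preimage word of the \emph{three-letter} word $(\ov g^{-1},\ov m,\ov g)$ directly, which is legitimate because $\ov{\D}$ is by definition the set of all such images; so there is $(g_1,m_1,g_2)\in\D$ with those three images. Since $g_1\beta=\ov g^{-1}=(g\beta)^{-1}=g^{-1}\beta$ we may as well replace $g_1$ by $g^{-1}$ after adjusting (or simply work with $g_1,g_2$ as they are, noting $m_1\in\M$ since $m_1\beta=\ov m$ need not force $m_1\in\M$ — so here I really do need $m_1\in\M$, which I can arrange by choosing the preimage word to have middle letter in $\M$, again possible because every letter of a word in $\ov{\D}$ lifts and we may lift $\ov m$ into $\M$). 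Then $\Pi(g_1,m_1,g_2)\in\M$ because $\M\unlhd\L$ (this is where normality of $\M$ is used, together with the fact that $(g_1,m_1,g_2)\in\D$ and, after replacing, $g_2=g_1^{-1}$-type relations hold; if not I first massage the word using partial-group axioms and \eqref{E:3} to bring it to the form $(g_1,m_1,g_1^{-1})$ with the same product), and applying $\beta$ gives $\ov m^{\ov g}=\Pi(g_1,m_1,g_2)\beta\in\M\beta$.

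I would also need $\M\beta$ to be a partial \emph{normal} subgroup in the sense that it is invariant under all conjugations that are defined; the argument above is exactly that. Finally I should double-check the harmless point that $\M\beta\subseteq\ov{\L}$ and $\One\in\M\beta$, both immediate. The main obstacle is the lifting issue in the middle paragraph: a projection only guarantees that $\ov{\D}$ equals the set of images of words in $\D$, so to conjugate a lifted element I must choose a \emph{compatible} preimage word — with its middle letter genuinely in $\M$ and its outer letters mutually inverse (or reducible to that shape via the partial-group axioms and \eqref{E:3}) — rather than lifting each letter independently; once that bookkeeping is set up, normality of $\M$ in $\L$ transports along $\beta$ directly. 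This is precisely the kind of verification that Chermak's framework (\cite[Definition~4.4]{Chermak:2015} and the basic partial-group lemmas in \cite[Section~1,2]{Chermak:2015}) is designed to make routine, so I expect no deeper difficulty.
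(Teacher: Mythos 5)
Your proof has a genuine gap exactly where you flag it, and the gap is not the routine bookkeeping you hope it is. The projection axiom guarantees that $(\ov g^{-1},\ov m,\ov g)\in\ov\D$ is the image of \emph{some} word $(g_1,m_1,g_2)\in\D$, but it gives you no control over which preimages you get: you cannot independently prescribe $m_1$ to lie in $\M$, and you cannot arrange $g_1=g_2^{-1}$. All you know is $m_1\beta=\ov m\in\M\beta$, i.e.\ $m_1\in\M\ker(\beta)$, and $g_1\beta=(g_2\beta)^{-1}$, i.e.\ $g_1$ and $g_2^{-1}$ agree modulo $\ker(\beta)$ — both of which force $\ker(\beta)$ into the picture. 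Your suggestion to ``massage the word using the partial-group axioms to bring it to the form $(g_1,m_1,g_1^{-1})$'' is precisely the step you would need to prove, and it does not follow from the axioms alone; it is the crux of the matter, not a reduction of it. The same unaddressed lifting problem already appears in your first paragraph, where you assert that words in $\W(\M\beta)\cap\ov\D$ are ``precisely images of words in $\M$ lying in $\D$'': the projection axiom gives a lift in $\W(\L)\cap\D$, not in $\W(\M)$.

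The paper avoids all of this by never lifting directly. It sets $\N:=\ker(\beta)\unlhd\L$ (Chermak's Lemma 1.14), replaces $\M$ by $\M\N$, which is still partial normal by the product theorem \cite[Theorem~1]{Henke:2015a}, observes $\M\beta=(\M\N)\beta$ via \eqref{E:3}, and then invokes the partial subgroup correspondence \cite[Theorem~4.7]{Chermak:2015}, which applies because $\M\N\supseteq\N$ and carries partial normal subgroups to partial normal subgroups. The conceptual point you are missing is that the natural receptacle for the lifted middle letter is $\M\ker(\beta)$, not $\M$; once you work with that enlarged partial normal subgroup (whose normality is itself a nontrivial theorem, not a formality) and note that it contains the kernel, the correspondence theorem does exactly the bookkeeping you were hoping to do by hand. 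To repair your argument you would either have to reprove a version of that correspondence, or cite it — at which point you are back to the paper's proof.
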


\begin{proof}
Let $\M\unlhd\L$. By \cite[Lemma~1.14]{Chermak:2015}, $\N:=\ker(\beta)$ is a partial normal subgroup of $\L$. Hence, \cite[Theorem~1]{Henke:2015a} yields  $\M\N\unlhd\L$. It follows from the definition of the kernel and \eqref{E:3} that $\M\beta=(\M\N)\beta$. Hence, the partial subgroup correspondence \cite[Theorem~4.7]{Chermak:2015} yields the assertion.
\end{proof}

\begin{proof}[Proof of Theorem~\ref{P:Localities}]
Part (a) holds by Lemma~\ref{L:Decompose}. It follows from \cite[Lemma~3.2]{Chermak:2015} that $\N\K=\K\N$ and that (a) implies (b). So it remains to show that $\N\K\unlhd\L$ and $\N\K\cap S=T(\K\cap S)$. 

\smallskip

We show first that $\N\K$ is a partial subgroup. If $n\in\N$ and $k\in\K$ with $(n,k)\in\D$, then by \cite[Lemma~1.4(f)]{Chermak:2015}, we have $(nk)^{-1}=k^{-1}n^{-1}\in\K\N=\N\K$, so  $\N\K$ is closed under inversion. Let now $v=(f_1,\dots,f_l)\in\W(\N\K)$. Then by (b), for each $i=1,\dots,l$, there exist $n_i\in\N_i$ and $k_i\in\K_i$ such that $(k_i,n_i)\in\D$, $f_i=k_in_i$ and $S_{f_i}=S_{(k_i,n_i)}$. Then, setting $w:=(k_1,n_1,\dots,k_l,n_l)$, we have $S_w=S_v\in\Delta$ and thus $w\in\D$ by \eqref{E:1}. 
Set $u:=(k_1,\dots,k_n)$. By \cite[Lemma~3.4]{Chermak:2015}, there exists $g\in\N$ such that $u\circ (g)\in\D$ and $\Pi(w)=\Pi(u\circ (g))$. Observe that $\Pi(u)\in\K$ as $\K$ is a partial subgroup. Hence, by the axioms of a partial group, we have $\Pi(v)=\Pi(w)=\Pi(u\circ(g))=\Pi(u)g\in\K\N=\N\K$. This shows that $\N\K$ is a partial subgroup.

\smallskip

Let $\alpha\colon \L\rightarrow \ov{\L}:=\L/\N$ be the natural projection (which exists by \cite[Lemma~3.16, Corollary~4.5]{Chermak:2015}). Then $\alpha$ is a homomorphism of partial groups and so $\alpha|_{N_\L(T)}\colon N_\L(T)\rightarrow \ov{\L}$ is also a homomorphism of partial groups. Indeed, it follows from \cite[Theorem~4.3(b)]{Chermak:2015} that  $\alpha|_{N_\L(T)}$ is actually a projection of partial groups from $N_\L(T)$ to $\ov{\L}$. Therefore, as  $(N_\L(T),\Delta,S)$ is a locality by \cite[Lemma~2.12]{Chermak:2015} and since $\K\unlhd N_\L(T)$, Lemma~\ref{L:PartialNormalProjection}  yields $\K\alpha\unlhd\ov{\L}$. As $\N$ is the kernel of $\alpha$, it follows from \eqref{E:3} that $(\K\N)\alpha=\K\alpha\unlhd\ov{\L}$. We have seen moreover that $\N\K=\K\N$ is a partial subgroup containing $\N$. Using the partial subgroup correspondence \cite[Proposition~4.7]{Chermak:2015}, this implies $\N\K\unlhd\L$.

\smallskip

If $g\in N_{\N\K}(S)$, then it follows from (a) that $g=nk$ for some $n\in\N$, $k\in\K$ with $(n,k)\in\D$ and $S_g=S_{(n,k)}$. As $g\in N_\L(S)$, it follows $S=S_g=S_{(n,k)}$, so $n\in N_\N(S)$ and $k\in N_\K(S)$. Thus $N_{\N\K}(S)\subseteq  N_\N(S)N_\K(S)$. The converse inclusion holds as $N_\L(S)$ is a subgroup and thus closed under products. Hence, $N_{\N\K}(S)=NK$, where $N:=N_\N(S)$ and $K:=\N_\K(S)$ are normal subgroups of the group $N_\L(S)$; to see that $K\unlhd N_\L(S)$ one uses here that  $N_\L(S)\subseteq N_\L(T)$. Hence, $S\cap \N\K=S\cap N_\L(S)\cap \N\K=S\cap N_{\N\K}(S)=S\cap (NK)=(S\cap N)(S\cap K)=(S\cap \N)(S\cap \K)=T(S\cap\K)$.
\end{proof}

\subsection{Restrictions of localities}

Let $(\L^+,\Delta^+,S)$ be a locality with product $\Pi^+\colon\D^+\rightarrow\L^+$. Suppose $\Delta\subseteq\Delta^+$ is overgroup-closed in $S$ and closed under $\F_S(\L^+)$-conjugacy. We set then
\[\L^+|_\Delta:=\{f\in\L^+\colon S_f\in\Delta\}\mbox{ and }\D:=\{w\in\D^+\colon S_w\in\Delta\}.\]
It turns out that $\L^+|_\Delta$ together with $\Pi^+|_\D\colon \D\rightarrow\L^+|_\Delta$ is a partial group, and $(\L^+|_\Delta,\Delta,S)$ is a locality, called the \emph{restriction} of $\L^+$ to $\Delta$.

\smallskip

Let now $\F$ be a saturated fusion system over $S$. Recall that a locality $(\L,\Delta,S)$ over $\F$ is called a \emph{linking locality} if $\F^{cr}\subseteq\Delta$ and, for every $P\in\Delta$, the normalizer $N_\L(P)$ is a group of characteristic $p$. It is shown in \cite[Theorem~A(b)]{Henke:2015} that there is always a linking locality $(\L^s,\F^s,S)$ over $S$ whose object set is the set $\F^s$ of \emph{$\F$-subcentric subgroups} of $S$ (cf. \cite[Definition~1]{Henke:2015}). Moreover, if $(\L,\Delta,S)$ is any linking locality over $\F$, then $\Delta\subseteq\F^s$ by \cite[Proposition~1(b)]{Henke:2015}, and $\L$ is isomorphic to $\L^s|_\Delta$ via an isomorphism which restricts to the identity on $S$ by \cite[Theorem~A(a)]{Henke:2015}. This gives us a way of moving between linking localities with different object sets. We will use this together with the following Lemma to deduce Corollary~\ref{C:EDNormalLocalities} from the statement in Theorem~\ref{T:ED}(e).

\begin{lemma}\label{L:RestrictionLocality}
Let $(\L^+,\Delta^+,S)$ and $(\L,\Delta,S)$ be a localities with $\Delta\subseteq\Delta^+$ and $\L^+|_\Delta=\L$. Write $\Pi^+$ and $\Pi$ for the products on $\L$ and $\L^+$ respectively. Let $\N^+\unlhd\L^+$ and $\K^+\unlhd N_{\L^+}(T)$. Then $\N:=\N^+\cap\L\unlhd\L$ and $\K:=\K^+\cap\L\unlhd N_\L(T)$. Moreover, $\Pi^+(\N^+,\K^+)\cap\L=\Pi(\N,\K)$.
\end{lemma}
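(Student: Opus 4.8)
The statement has three assertions: that $\N = \N^+\cap\L$ is a partial normal subgroup of $\L$, that $\K = \K^+\cap\L$ is a partial normal subgroup of $N_\L(T)$, and that the product of $\N^+$ and $\K^+$ in $\L^+$ meets $\L$ exactly in the product of $\N$ and $\K$ in $\L$. The first two claims are instances of the same basic fact, so I would isolate it as follows: if $(\L^+,\Delta^+,S)$ is a locality, $\Delta\subseteq\Delta^+$ is overgroup-closed and $\F_S(\L^+)$-invariant, and $\M^+\unlhd\L^+$, then $\M^+\cap\L^+|_\Delta\unlhd\L^+|_\Delta$. This should be essentially routine: $\M^+\cap\L$ is a partial subgroup of $\L$ because $\L$ is a partial subgroup of $\L^+$ and $\M^+$ is closed under inversion and products, and it is closed under conjugation by elements of $\L$ because $\M^+$ is closed under $\L^+$-conjugation whenever the conjugate is defined, and all the relevant products that are defined in $\L$ are also defined in $\L^+$. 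I would then apply this with $\M^+=\N^+$ and $\L^+|_\Delta=\L$, and with $\M^+=\K^+$ and the locality $(N_{\L^+}(T),\Delta^+,S)$ restricted to $\Delta$ — noting that by \cite[Lemma~2.12]{Chermak:2015} $N_{\L^+}(T)$ is a locality and that its restriction to $\Delta$ is exactly $N_{\L^+}(T)\cap\L = N_{\L^+}(T)|_\Delta$, which one checks equals $N_\L(T)$ since membership in $N_\L(T)$ only depends on $S_f\in\Delta$ together with the condition $T^f=T$.

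\textbf{The product identity.} For $\Pi^+(\N^+,\K^+)\cap\L = \Pi(\N,\K)$, the inclusion ``$\supseteq$'' is immediate: if $n\in\N$, $k\in\K$ with $(n,k)\in\D$, then $(n,k)\in\D^+$, $n\in\N^+$, $k\in\K^+$, and $\Pi(n,k)=\Pi^+(n,k)\in\Pi^+(\N^+,\K^+)$, while $\Pi(n,k)\in\L$ since $\L$ is a partial subgroup. For ``$\subseteq$'', take $g\in\Pi^+(\N^+,\K^+)\cap\L$. The key is to use Theorem~\ref{P:Localities}(a) applied inside $\L^+$: since $\N^+\unlhd\L^+$, $T = S\cap\N^+$ and $\K^+\unlhd N_{\L^+}(T)$, there exist $n\in\N^+$ and $k\in\K^+$ with $(n,k)\in\D^+$, $g=nk$, and $S_g = S_{(n,k)}$. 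Because $g\in\L$ we have $S_g\in\Delta$, hence $S_{(n,k)}\in\Delta$, which by the characterization of $\D$ forces $(n,k)\in\D$; moreover $S_n\geq\cdots$ — more directly, $S_{(n,k)}\leq S_n$ and $S_{(n,k)}\leq S_{nk}^{?}$; I should instead argue $S_n \in\Delta$ and $S_k\in\Delta$ from $S_{(n,k)}\in\Delta$ using that $S_{(n,k)}\leq S_n$ (so $S_n\in\Delta$ by overgroup-closure) and then $(S_{(n,k)})^n \leq S_k$ with $(S_{(n,k)})^n\in\Delta$ by $\F_S(\L^+)$-invariance (so $S_k\in\Delta$). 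Hence $n\in\N^+\cap\L=\N$, $k\in\K^+\cap\L=\K$, and $g = \Pi(n,k)\in\Pi(\N,\K)$.

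\textbf{Main obstacle.} The only genuinely delicate point is bookkeeping around which object sets the various normalizer-localities live over, and confirming that restriction of $N_{\L^+}(T)$ to $\Delta$ coincides with $N_\L(T)$; this requires unwinding that $N_{\L^+}(T)|_\Delta = \{f\in\L^+ : S_f\in\Delta,\ T^f = T\} = N_{\L}(T)$, where the middle equality uses that $f\in\L^+$ with $S_f\in\Delta$ means precisely $f\in\L$. Everything else — the partial-subgroup and conjugation-closure checks for $\M^+\cap\L$, and the two inclusions for the product — reduces to the fact that a word lies in $\D$ iff it lies in $\D^+$ and its $S_w$ is in $\Delta$, combined with \eqref{E:1}, \eqref{E:2}, and Theorem~\ref{P:Localities}(a) applied in $\L^+$. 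I would also double-check that $T = S\cap\N^+$ so that Theorem~\ref{P:Localities} genuinely applies to $(\L^+,\Delta^+,S)$; this is part of the hypothesis as stated (it is implicit that the $T$ appearing in $\K^+\unlhd N_{\L^+}(T)$ is $S\cap\N^+$, matching the setup of the section).
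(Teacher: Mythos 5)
Your proposal is correct and follows essentially the same route as the paper: one checks that intersection with the restricted locality sends partial normal subgroups to partial normal subgroups, identifies $N_{\L^+}(T)\cap\L$ with $N_\L(T)$ (the paper simply cites \cite[Lemma~2.23(b)]{Henke:2020} here rather than rederiving it), and proves the nontrivial inclusion $\Pi^+(\N^+,\K^+)\cap\L\subseteq\Pi(\N,\K)$ by invoking the decomposition $g=nk$ with $S_g=S_{(n,k)}$ from Lemma~\ref{L:Decompose} (equivalently Theorem~\ref{P:Localities}(a)) and then using $S_g\in\Delta$ together with \eqref{E:1} to conclude $(n,k)\in\D$. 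Your mid-argument detour about deducing $S_n,S_k\in\Delta$ separately is an equivalent but slightly longer way of getting $n,k\in\L$; the paper simply reads $n,k\in\L$ off from $(n,k)\in\D$.
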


\begin{proof}
It is easy to observe that $\N\unlhd\L$. It follows from \cite[Lemma~2.23(b)]{Henke:2020} that $N_{\L^+}(T)\cap\L=N_\L(T)$. Hence, $\K=\K^+\cap N_\L(T)\unlhd N_\L(T)$. 
As $\Pi$ is the restriction of $\Pi^+$, we have $\Pi(\N,\K)\subseteq \Pi^+(\N^+,\K^+)\cap \L$. Let now $f\in\Pi^+(\N^+,\K^+)\cap \L$. By Lemma~\ref{L:Decompose}, there is $n\in\N^+$ and $k\in\K^+$ such that $f=\Pi^+(n,k)$ and $S_f=S_{(n,k)}$. As $f\in\L$, we have $S_f\in\Delta$ by definition of the restriction. Hence, by \eqref{E:1}, $(n,k)\in\D$ and in particular $n,k\in\L$. Hence, $n\in\N$, $k\in\K$ and $f=\Pi(n,k)\in\Pi(\N,\K)$. 
\end{proof}

\section{Some results on linking localities and regular localities}

\subsection{Regular localities}\label{SS:Regular}

The definition of regular localities and the most important results surrounding the concept are originally due to Chermak \cite{ChermakIII}, but we refer to the treatment of the subject in \cite{Henke:Regular}, as there appear to be some gaps in Chermak's proofs.

\smallskip

As remarked before, if $\F$ is a saturated fusion system, then by \cite[Theorem~A]{Henke:2015} there exists a linking locality $(\L^s,\F^s,S)$ over $\F$ whose object set is the set $\F^s$ of subcentric subgroups. Building on earlier work of Chermak \cite{ChermakIII}, we introduced in \cite[Definition~9.17]{Henke:Regular} a certain partial normal subgroup $F^*(\L)$ of $\L$, for every linking locality $(\L,\Delta,S)$ over $\F$. Then the set $\delta(\F)$ is defined as
\begin{equation*}%\label{E:deltaF}
\delta(\F):=\{P\leq S\colon P\cap F^*(\L^s)\in\F^s\}.
\end{equation*}
It is shown in \cite[Lemma~10.2]{Henke:Regular} that the set $\delta(\F)$ depends only on $\F$ and not on the choice of $\L^s$. Indeed, for every linking locality $(\L,\Delta,S)$, we have
\begin{equation}\label{E:deltaF}
\delta(\F):=\{P\leq S\colon P\cap F^*(\L)\in\F^s\}.
\end{equation}
A linking locality $(\L,\Delta,S)$ is called a \emph{regular locality}, if $\Delta=\delta(\F)$. For every saturated fusion system $\F$, there exists a regular locality over $\F$ (cf. \cite[Lemma~10.4]{Henke:Regular}).

\smallskip

The next theorem states one of the most important properties of regular localities.

\begin{theorem}[{\cite[Corollary~7.9]{ChermakIII}, \cite[Corollary~10.19]{Henke:Regular}}]\label{T:RegularSubnormal}
Let $(\L,\Delta,S)$ be a regular locality and $\H\subn\L$. Then $\F_{S\cap\H}(\H)$ is saturated and $(\H,\delta(\F_{S\cap\H}(\H)),S\cap\H)$ is a regular locality.
\end{theorem}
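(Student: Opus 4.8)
The plan is to prove the statement first for a partial \emph{normal} subgroup and then to reduce the general case to it by induction on the subnormal length. For the reduction, fix a chain $\H=\H_0\unlhd\H_1\unlhd\cdots\unlhd\H_m=\L$. If $m=0$ there is nothing to prove. If $m\geq 1$, then $\H_{m-1}\unlhd\L$, so the normal case (proved below, applied to $\L$) gives that $(\H_{m-1},\delta(\F_{S\cap\H_{m-1}}(\H_{m-1})),S\cap\H_{m-1})$ is a regular locality; since $\H$ is a partial subnormal subgroup of this locality via the chain $\H_0\unlhd\cdots\unlhd\H_{m-1}$, induction on the length yields that $\F_{S\cap\H}(\H)$ is saturated and $(\H,\delta(\F_{S\cap\H}(\H)),S\cap\H)$ is a regular locality. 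So it remains to treat the case $\N:=\H\unlhd\L$; write $\F:=\F_S(\L)$ and $T:=S\cap\N$.

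Since $(\L,\Delta,S)$ is regular we have $\Delta=\delta(\F)$, hence $\delta(\F)\subseteq\Delta$, and so by \cite[Theorem~A]{Chermak/Henke} (as recalled in the introduction) the bijection $\Psi_\L$ satisfies $\Psi_\L(\M)=\F_{S\cap\M}(\M)$ for every $\M\unlhd\L$. Taking $\M=\N$ identifies $\F_T(\N)=\Psi_\L(\N)$ with an element of $\fN(\F)$, i.e.\ with a normal subsystem of $\F$; as normal subsystems are by definition saturated, $\F_T(\N)$ is saturated.

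Next I would equip $\N$ with the locality structure it carries as a partial normal subgroup of $\L$, namely $(\N,\Delta_\N,T)$ with the restricted partial product, $\Delta_\N$ being the object set naturally attached to $\N$ (consisting essentially of the $P\leq T$ with $P\in\Delta$); I then need this to be a \emph{linking} locality. The inclusion $\F_T(\N)^{cr}\subseteq\Delta_\N$ should follow from $\F^{cr}\subseteq\Delta$ together with the standard relationship between the centric-radical subgroups of $\F$ and those of its normal subsystem $\F_T(\N)$. For the characteristic-$p$ condition the key point is that, for $P\in\Delta_\N$, the group $N_\N(P)$ is a \emph{normal} subgroup of the characteristic-$p$ group $N_\L(P)$ (conjugation in $\L$ by an element of $N_\L(P)$ preserves $\N$ and fixes $P$), and a normal subgroup $M$ of a finite group $G$ of characteristic $p$ is again of characteristic $p$ — when $F^*(G)=O_p(G)$ one has $E(M)=1$ and $O_{p'}(M)=1$, whence $F^*(M)=O_p(M)$.

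The last step, which I expect to be \textbf{the main obstacle}, is the identification of the object set: I must show $\Delta_\N=\delta(\F_T(\N))$. By \eqref{E:deltaF} applied to the linking locality $(\N,\Delta_\N,T)$ one has $\delta(\F_T(\N))=\{P\leq T\colon P\cap F^*(\N)\in\F_T(\N)^s\}$, so the task reduces to comparing the generalized Fitting partial normal subgroup $F^*(\N)$ with $F^*(\L)$, and $\F_T(\N)$-subcentricity with $\F$-subcentricity on subgroups of $T$. Concretely, the comparison hinges on an identity of the form $F^*(\N)=\N\cap F^*(\L)$ (whence $P\cap F^*(\N)=P\cap F^*(\L)$ for $P\leq T$) together with the fact that, for such $P$, the subgroup $P\cap F^*(\L)$ is $\F_T(\N)$-subcentric exactly when it is $\F$-subcentric; combined with $\Delta=\delta(\F)=\{P\leq S\colon P\cap F^*(\L)\in\F^s\}$ this gives $\Delta_\N=\delta(\F_T(\N))$. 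This is where the construction of $\delta(-)$ through the generalized Fitting subsystem has to be used in an essential way, and where one must be careful to land on $\delta(\F_T(\N))$ on the nose rather than on something merely trapped between $\delta(\F_T(\N))$ and $\F_T(\N)^s$; in the latter situation one could still conclude via \cite[Theorem~A]{Henke:2015}, which produces the regular locality over $\F_T(\N)$ with object set $\delta(\F_T(\N))$, and then identifies it with the natural locality attached to $\N$.
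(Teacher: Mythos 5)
This theorem is not proved in the paper you are working from; it is cited as a known result from \cite[Corollary~7.9]{ChermakIII} and \cite[Corollary~10.19]{Henke:Regular}, so there is no internal argument to compare against. Assessed on its own merits, your outline has the right shape: the reduction from a subnormal chain to the normal case by induction, the use of $\Psi_\L$ to extract saturation of $\F_T(\N)$, and the observation that a normal subgroup of a finite group of characteristic $p$ is again of characteristic $p$ are all sound steps.

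However, the step you flag as the main obstacle is a genuine gap, and it is wider than your sketch acknowledges. First, before the linking-locality axioms can even be discussed for $(\N,\Delta_\N,T)$, one must know that this triple is a locality at all; with $\Delta_\N:=\{P\leq T\colon P\in\Delta\}$, the basic requirement that $S_f\cap T\in\Delta_\N$ for every $f\in\N$ is far from automatic ($S_f\in\Delta$ says nothing a priori about $S_f\cap T$), and it is precisely this that the regularity hypothesis $\Delta=\delta(\F)$ is designed to rescue, via the fact that membership in $\D$ can be tested on $S_w\cap T^*$ as in \eqref{E:SwcapT*}. Second, the identity $F^*(\N)=\N\cap F^*(\L)$ is asserted but not argued; in the locality setting the relevant comparison is established through central-product decompositions of $F^*$ in the style of \eqref{E:FstarF} and \eqref{E:FstarNFT}, built on $E(\L)=E(\N)E(\N^\perp)$ and $F^*(\L)=E(\L)O_p(\L)$, not by a bare intersection argument. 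Third, the claim that for $P\leq T$ the subgroup $P\cap F^*(\L)$ is $\F_T(\N)$-subcentric exactly when it is $\F$-subcentric is itself a theorem about comparing subcentricity across a normal subsystem inclusion, and this is what results like Lemma~\ref{E:F1F2} are for. In short, your outline correctly locates the difficulty but leaves essentially all of the content of the cited corollaries unproved.
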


Let $(\L,\Delta,S)$ be a regular locality. The theorem above leads to a natural definition of components of $\L$ (cf. \cite[Definition~7.9, Definition~11.1]{Henke:Regular}). If $\K_1,\dots,\K_r$ are components of $\L$, then the product $\prod_{i=1}^r\K_i$ does not depend on the order of the factors and is a partial normal subgroup of $F^*(\L)$ (cf. \cite[Proposition~11.7]{Henke:Regular}). The product of all components of $\L$ is denoted by $E(\L)$ and turns out to be a partial normal subgroup of $\L$ (cf. \cite[Lemma~11.13]{Henke:Regular}). We have moreover  $F^*(\L)=E(\L)O_p(\L)$. Note that  Theorem~\ref{T:RegularSubnormal} makes it possible to form $E(\H)$ for every partial subnormal subgroup $\H$ of $\L$.

\subsection{Normalizers of strongly closed subgroups}

The following lemma will be used frequently.

\begin{lemma}[{\cite[Lemma~3.35(b)]{Henke:Regular}}]\label{L:NLT}
If $T$ is a strongly closed subgroup of $\F$, then $(N_\L(T),\Delta,S)$ is a linking locality over $N_\F(T)$.
\end{lemma}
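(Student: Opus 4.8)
The plan is to verify the three conditions in the definition of a linking locality for the triple $(N_\L(T),\Delta,S)$ relative to $N_\F(T)$: that it is a locality over $N_\F(T)$, that $N_\F(T)$ is saturated, and that $N_\F(T)^{cr}\subseteq\Delta$ with $N_{N_\L(T)}(Q)$ of characteristic $p$ for every $Q\in\Delta$. First I would note that $T\unlhd S$: for $s\in S$ conjugation by $s$ is an $\F$-morphism with source $T$, so strong closedness forces $T^s\le T$ and hence $T^s=T$. Thus $N_S(T)=S$; and since $\Delta$ is closed under $\F$-conjugacy it is closed under $N_\F(T)$-conjugacy and overgroup-closed in $S$, so $(N_\L(T),\Delta,S)$ is a locality by \cite[Lemma~2.12]{Chermak:2015}. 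That $N_\F(T)$ is saturated is standard, $T$ being a fully normalized normal subgroup of $S$ (see \cite{Aschbacher/Kessar/Oliver:2011}).

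The key point, which carries the rest of the argument, is that $N_\F(T)^{cr}\subseteq\F^{cr}$. Since $T$ is normal in $N_\F(T)$ we have $T\le O_p(N_\F(T))\le P$ for every $P\in N_\F(T)^{cr}$, so every such $P$ contains $T$. Now if $P\le S$ is \emph{any} subgroup with $T\le P$, then strong closedness forces every $\F$-isomorphism with source $P$ to map $T$ onto $T$; as $PT=P$, this says precisely that such an isomorphism is a morphism of $N_\F(T)$. Hence, on subgroups containing $T$, $\F$-conjugacy agrees with $N_\F(T)$-conjugacy and $\Aut_\F(P)=\Aut_{N_\F(T)}(P)$, so for such $P$ one has $P\in N_\F(T)^{cr}$ if and only if $P\in\F^{cr}$. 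In particular $N_\F(T)^{cr}\subseteq\F^{cr}\subseteq\Delta$.

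Next I would identify $\F_S(N_\L(T))=N_\F(T)$. For $f\in N_\L(T)$ one has $T\le S_f$, $T^f=T$ and $S_f$ a subgroup of $S$, so every conjugation map by $f$ is the restriction of a morphism fixing $T$; this gives $\F_S(N_\L(T))\subseteq N_\F(T)$. For the reverse inclusion, recall that a saturated fusion system is generated by the automizers of its centric-radical subgroups together with inclusions, so it suffices to realise $\Aut_{N_\F(T)}(P)$ for $P\in N_\F(T)^{cr}$ inside $\F_S(N_\L(T))$. For such $P$ we have $T\le P$, hence $N_\L(P)\subseteq N_\L(T)$ — if $g\in N_\L(P)$ then $T\le P\le S_g$ and $T^g=T$ by strong closedness — so that $N_\L(P)=N_{N_\L(T)}(P)$; and since $P\in\Delta$ and $\L$ is a linking locality, the conjugation homomorphism $N_\L(P)\to\Aut_\F(P)=\Aut_{N_\F(T)}(P)$ is surjective. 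Thus every such automizer, and hence all of $N_\F(T)$, lies in $\F_S(N_\L(T))$.

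Finally, the characteristic-$p$ condition. For $Q\in N_\F(T)^{cr}$ it was already established that $N_{N_\L(T)}(Q)=N_\L(Q)$, which is of characteristic $p$ since $\L$ is a linking locality. As $(N_\L(T),\Delta,S)$ is then a locality over the saturated system $N_\F(T)$ with $N_\F(T)^{cr}\subseteq\Delta$ and characteristic-$p$ normalizers on the centric-radical objects, the basic theory (the characteristic-$p$ condition propagates from the centric-radical objects to all of $\Delta$; cf.\ \cite{Henke:2015}) shows that it is a linking locality over $N_\F(T)$. Alternatively one checks directly that for arbitrary $Q\in\Delta$ the group $N_{N_\L(T)}(Q)=N_\L(Q)\cap N_\L(T)$ contains $O_p(N_\L(Q))$ and hence is of characteristic $p$, being a subgroup of the characteristic-$p$ group $N_\L(Q)$ containing its $O_p$. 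I expect the real substance of the proof to be the containment $N_\F(T)^{cr}\subseteq\F^{cr}$ of the second paragraph — everything else is bookkeeping with the axioms of a locality — with the one genuinely delicate point, should one take the direct route in the last step, being the claim that $O_p(N_\L(Q))$ normalizes the strongly closed subgroup $T$.
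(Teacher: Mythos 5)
The paper does not prove this lemma itself but cites it directly from~\cite[Lemma~3.35(b)]{Henke:Regular}, so there is no in-paper proof to compare against. Your argument is sound and follows the natural route: $T\unlhd S$ from strong closedness gives $N_S(T)=S$; the reduction $N_\F(T)^{cr}\subseteq\F^{cr}$ via $T\le O_p(N_\F(T))\le P$ and the agreement of $\F$- and $N_\F(T)$-conjugacy on overgroups of $T$; Alperin's theorem to get $\F_S(N_\L(T))=N_\F(T)$; and the characteristic-$p$ condition by the direct route. One small remark: the point you flag at the end as ``genuinely delicate'' is not actually delicate — $O_p(N_\L(Q))\le N_S(Q)\le S$ since $N_S(Q)\in\Syl_p(N_\L(Q))$, and $S\subseteq N_\L(T)$ because $T\unlhd S$; so $O_p(N_\L(Q))\le N_{N_\L(T)}(Q)$ comes for free, and the characteristic-$p$ property for all $Q\in\Delta$ follows without invoking any propagation-from-$\F^{cr}$ result.
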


The next goal is to show in Lemma~\ref{L:NLTRegular} below that $N_\L(T)$ can even be given the structure of a  regular locality, provided $(\L,\Delta,S)$ is regular and there exists a normal subsystem of $\F_S(\L)$ over $T$. We first summarize some background that is needed in the proof.

\smallskip

Suppose $S_1,\dots,S_k$ are subgroups of a $p$-group $S$ with $[S_i,S_j]=1$ for $i\neq j$. If, for all $i=1,2,\dots,k$, $\F_i$ is a fusion system over $S_i$ with $S_i\cap \prod_{j\neq i}S_j\leq Z(\F_i)$, then we introduced in \cite[Section~2.5]{Chermak/Henke} a fusion system $\F_1*\F_2*\cdots *\F_k$ over $S_1S_2\cdots S_k$, which can be regarded as a central product of $\F_1,\dots,\F_k$. The notion is a priori slightly different from Aschbacher's definition of a central product \cite[p.14]{Aschbacher:2011}, but of course closely related (cf. \cite[Lemma~2.19(a),(b)]{Chermak/Henke}). Whenever we write $\F_1*\F_2*\cdots *\F_k$ below, then we mean implicitly that this is well-defined, i.e. $[S_i,S_j]=1$ for all $i\neq j$ and $S_i\cap \prod_{j\neq i}S_j\leq Z(\F_i)$ for all $i=1,\dots,k$. We will use the following lemma.

\begin{lemma}[{\cite[Lemma~2.14(g)]{Henke:Regular}}]\label{E:F1F2}
Let $S$ be a $p$-group, and let $\F_i$ be a fusion system over $S_i\leq S$ for $i=1,2$. Suppose $[S_1,S_2]=1$ and $S_1\cap S_2\leq Z(\F_1)\cap Z(\F_2)$. If $P_i\leq S_i$ for $i=1,2$, then $P_1P_2\in(\F_1*\F_2)^s$ if and only if $P_i\in\F_i^s$ for each $i=1,2$.
\end{lemma}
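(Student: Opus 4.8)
The plan is to prove Lemma~\ref{E:F1F2} by unwinding the definition of the central product fusion system $\F_1*\F_2$ and the definition of $\F$-subcentric subgroups, reducing the two directions to statements about normalizers and hyperfocal-type decompositions. First I would recall from \cite[Section~2.5]{Chermak/Henke} that a subgroup $Q\leq S_1S_2$ is an object of $(\F_1*\F_2)^s$ precisely when $Q$ is $(\F_1*\F_2)$-subcentric, i.e. when $O_p(N_{\F_1*\F_2}(Q))$ together with the surrounding normalizer data witnesses the subcentricity condition; the key structural input is that since $[S_1,S_2]=1$ and $S_1\cap S_2\leq Z(\F_1)\cap Z(\F_2)$, every subgroup $Q\leq S_1S_2$ that we need to test can, after a suitable $(\F_1*\F_2)$-conjugation, be written in the form $Q = P_1P_2$ with $P_i = Q\cap S_i$ (or at least with $P_i$ projections of $Q$), and normalizers in the central product decompose as $N_{\F_1*\F_2}(P_1P_2) = N_{\F_1}(P_1)*N_{\F_2}(P_2)$ up to the central identifications. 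I would cite \cite[Lemma~2.19]{Chermak/Henke} or the relevant structural lemmas for this decomposition of normalizers.

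For the direction "$P_i\in\F_i^s$ for $i=1,2$ implies $P_1P_2\in(\F_1*\F_2)^s$", I would take the linking locality picture: choose linking localities $(\L_i^s,\F_i^s,S_i)$ over $\F_i$ and form their (central) product locality, which is a linking locality over $\F_1*\F_2$; then $P_1P_2$ is an object of this product locality because each $P_i$ is an object of $\L_i^s$, and $N$ of $P_1P_2$ in the product is (the central product of) two groups of characteristic $p$, hence again of characteristic $p$. Since $\F^s$ is the maximal object set of a linking locality over $\F_1*\F_2$ by \cite[Proposition~1(b)]{Henke:2015}, this shows $P_1P_2\in(\F_1*\F_2)^s$. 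Concretely one needs: a central product of two groups of characteristic $p$ is of characteristic $p$ (easy: $O_p$ of a central product is the central product of the $O_p$'s, and the centralizer condition is inherited), and that $\F^{cr}$-subgroups of the central product are contained in the object set of the product locality.

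For the converse "$P_1P_2\in(\F_1*\F_2)^s$ implies $P_i\in\F_i^s$", I would argue contrapositively or directly: if $P_1P_2$ is subcentric in $\F_1*\F_2$, then $N_{\F_1*\F_2}(P_1P_2)$ is constrained (in the sense that its subcentric core is $P_1P_2$-related), and using the normalizer decomposition $N_{\F_1*\F_2}(P_1P_2) = N_{\F_1}(P_1)*N_{\F_2}(P_2)$ one reads off that each $N_{\F_i}(P_i)$ must satisfy the corresponding constrainedness/subcentricity condition, i.e. $P_i\in\F_i^s$. Here I would invoke that a central product $\G_1*\G_2$ of saturated fusion systems has $O_p(\G_1*\G_2) = O_p(\G_1)*O_p(\G_2)$ and that $\G_1*\G_2$ is constrained iff both $\G_i$ are, which is the fusion-system analogue of the group fact used in the first direction. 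Since the lemma is stated as \cite[Lemma~2.14(g)]{Henke:Regular}, in the paper itself the proof is simply a citation; in a self-contained write-up the main obstacle is the bookkeeping around the central identifications $S_1\cap S_2\leq Z(\F_i)$ and making sure the projections $P_i$ are the "right" subgroups — i.e. verifying that testing subcentricity of $Q\leq S_1S_2$ genuinely reduces to testing the two coordinates — but this is exactly what the cited structural lemmas on central products of fusion systems (and of localities) in \cite{Chermak/Henke} and \cite{Henke:Regular} are set up to handle, so I would lean on them rather than redo the argument.
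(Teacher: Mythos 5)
The paper itself offers no proof of this lemma: it is stated purely as a citation to \cite[Lemma~2.14(g)]{Henke:Regular}, as you correctly observe, so there is no in-paper argument to compare your sketch against. That said, your reconstruction follows the natural route. The crux in both directions is the compatibility of normalizers with the central product, $N_{\F_1*\F_2}(P_1P_2)\cong N_{\F_1}(P_1)*N_{\F_2}(P_2)$, together with the characterization of subcentricity via constrainedness of $N_{\F_1*\F_2}(Q)$ for a fully normalized conjugate $Q$; one must also check that fully normalized conjugates of $P_1P_2$ can be chosen coordinatewise as $P_1'P_2'$ with each $P_i'$ fully $\F_i$-normalized, and then that a central product of saturated fusion systems over a common central $p$-subgroup is constrained iff both factors are. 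Your detour through linking localities and characteristic-$p$ groups is one legitimate way to package the latter, though it can also be done directly at the level of fusion systems via models for constrained systems. One small point of care: your parenthetical ``$O_p$ of a central product is the central product of the $O_p$'s'' is true in the setting at hand precisely because the amalgamated subgroup $S_1\cap S_2$ is a $p$-group (one argues in the quotient by the center), and for the characteristic-$p$ conclusion what one actually uses is the centralizer computation (if $g_1g_2$ centralizes $O_p(G_1)O_p(G_2)$ then, since $g_2$ centralizes all of $G_1$, the element $g_1$ centralizes $O_p(G_1)$, hence $g_1\in O_p(G_1)$, and symmetrically for $g_2$); you gestured at this but did not spell it out.
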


If $\F$ is a fusion system over $S$, $P\leq S$ and $\E$ is a subsystem of $\F$ over $T\leq S$, then we will write $P\cap \E$ for $P\cap T$. In particular, $S\cap \E=T$.

\smallskip

If $(\L,\Delta,S)$ is a linking locality over a saturated fusion system $\F$, then it is stated in \cite[Theorem~E(d)]{Chermak/Henke} that $F^*(\L)$ corresponds to $F^*(\F)$ under the map $\Psi_\L$ introduced in the introduction. As shown in \cite[Lemma~7.21]{Chermak/Henke}, this implies that
\begin{equation}\label{E:deltaFFusion}
\delta(\F)=\{P\leq S\colon P\cap F^*(\F)\in\F^s\}=\{P\leq S\colon P\cap F^*(\F)\in F^*(\F)^s\}.
\end{equation}

\begin{lemma}\label{L:NLTRegular}
Let $\F$ be a saturated fusion system over $S$, and let $\E$ be a normal subsystem over $T$. Then the following hold:
\begin{itemize}
 \item [(a)] $\delta(N_\F(T))=\{Q\leq S\colon QO_p(N_\F(T))\in\delta(\F)\}$.
 \item [(b)] If $(\L,\Delta,S)$ is a regular locality over $\F$, then $(N_\L(T),\delta(N_\F(T)),S)$ is a regular locality over $N_\F(T)$.
\end{itemize}
\end{lemma}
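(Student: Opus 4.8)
The plan is to prove (a) first and then deduce (b) from (a), Lemma~\ref{L:NLT}, and the characterization \eqref{E:deltaFFusion} of $\delta$ applied to $N_\F(T)$. For (a), I would work with the central product decomposition coming from the normal subsystem $\E$. Since $\E\unlhd\F$ over $T$ and $T$ is strongly closed, there is a standard description of $F^*(N_\F(T))$: one has $O_p(N_\F(T))\supseteq T$ (as $T$ is normal in $N_\F(T)$), and I expect that $F^*(N_\F(T))$ decomposes, up to a central product, in a way that lets one separate the ``$\E$-part'' from a complementary part. Concretely, the key input is that there is a normal subsystem of $N_\F(T)$ over $T$ — indeed $\E$ itself, or $N_\E(T)$ — which is a component-type piece, together with $C_\F(T)$ or $C_{N_\F(T)}(T)$, and $N_\F(T)$ is built from these via a central product over $T\cap(\text{complement})\leq Z$. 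Applying Lemma~\ref{E:F1F2} to this central product, a subgroup $Q\leq S$ satisfies $Q\cap F^*(N_\F(T))\in F^*(N_\F(T))^s$ iff the corresponding pieces are subcentric in each factor; and $QO_p(N_\F(T))$ absorbs the $O_p$-contribution, which is exactly what converts the condition ``$Q\cap F^*(N_\F(T))\in (N_\F(T))^s$'' into ``$QO_p(N_\F(T))\cap F^*(\F)\in\F^s$''. Rewriting the right-hand side via \eqref{E:deltaFFusion} for $\F$ gives $QO_p(N_\F(T))\in\delta(\F)$, which is the claimed equality.

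For (b): by Lemma~\ref{L:NLT}, $(N_\L(T),\Delta,S)$ is a linking locality over $N_\F(T)$. To upgrade it to a regular locality over $N_\F(T)$ I must pass to the object set $\delta(N_\F(T))$ via a restriction. First I would check that $\delta(N_\F(T))\subseteq\Delta$: using part (a), if $Q\in\delta(N_\F(T))$ then $QO_p(N_\F(T))\in\delta(\F)=\Delta$, and since $O_p(N_\F(T))\supseteq T$ acts on things appropriately and $\Delta$ is overgroup-closed, also $Q\in\Delta$ (here one uses $Q\leq QO_p(N_\F(T))\in\Delta$). Since $\delta(N_\F(T))$ is overgroup-closed in $S$ and closed under $N_\F(T)$-conjugacy (it is a $\delta$-set, by the general theory of Subsection~\ref{SS:Regular}), the restriction $(N_\L(T)|_{\delta(N_\F(T))},\delta(N_\F(T)),S)$ is a locality, and it is a linking locality over $N_\F(T)$ because restricting a linking locality to a $\delta$-set containing $\F^{cr}$ of the ambient fusion system keeps the ``$N_\L(P)$ of characteristic $p$'' property and saturation (this is the standard restriction machinery recalled before Lemma~\ref{L:RestrictionLocality}, together with $N_\F(T)^{cr}\subseteq\delta(N_\F(T))$, which follows from $\F^{cr}$-type containments and part (a)). Having object set exactly $\delta(N_\F(T))$, it is regular by definition.

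The main obstacle I anticipate is part (a), specifically pinning down the precise central product description of $F^*(N_\F(T))$ in terms of $\E$ (or $N_\E(T)$) and a complement, and verifying the hypotheses of Lemma~\ref{E:F1F2} — i.e. that the two factors commute and intersect in a subgroup central in each. This requires knowing how $F^*$ behaves under passing to $N_\F(T)$ and how $\E\unlhd\F$ interacts with $C_\F(T)$; I would lean on \cite[Theorem~E]{Chermak/Henke} (the correspondence $F^*(\L)\leftrightarrow F^*(\F)$) transported into the locality $N_\L(T)$, combined with the fact that in a regular locality $F^*(\L)=E(\L)O_p(\L)$ and that $O_p(N_\L(T))$ contains $T$. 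Once the decomposition is in hand, the Lemma~\ref{E:F1F2} bookkeeping and the $O_p$-absorption trick are routine, and (b) is then a clean application of the restriction-of-localities formalism.
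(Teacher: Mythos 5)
Your strategy for part (a) matches the paper's in outline: a central product decomposition of $F^*(\F)$ and $F^*(N_\F(T))$, an application of Lemma~\ref{E:F1F2}, and an $O_p$-absorption step to pass from subgroups containing $O_p(N_\F(T))$ to arbitrary $Q$. The gap you flag yourself is real, and it is the whole content of (a): the paper pins down the decomposition via the identity $E(C_\F(\E))=E(N_\F(T))$ together with $F^*(\F)=E(\E)*E(C_\F(\E))*\F_{O_p(\F)}(O_p(\F))$ and $F^*(N_\F(T))=E(N_\F(T))*\F_{O_p(N_\F(T))}(O_p(N_\F(T)))$ (from \cite[Lemma~7.13(c), Corollary~7.18, Theorem~7.10(e)]{Chermak/Henke}). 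The factor that survives in both decompositions is $E(N_\F(T))$, not $\E$ or $N_\E(T)$ or $C_\F(T)$ as you tentatively suggest, so without this precise input the argument does not close.

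Part (b) contains a genuine error. You attempt to show $\delta(N_\F(T))\subseteq\Delta$ by arguing ``$Q\leq QO_p(N_\F(T))\in\Delta$ and $\Delta$ is overgroup-closed, hence $Q\in\Delta$.'' Overgroup-closure goes the other way: it lets you go up from a member of $\Delta$ to its overgroups, not down to its subgroups. In fact the correct containment is the reverse one, $\Delta\subseteq\delta(N_\F(T))$: if $Q\in\Delta$ then $QO_p(N_\F(T))\in\Delta$ by overgroup-closure, so $Q\in\delta(N_\F(T))$ by (a). Thus passing from object set $\Delta$ to object set $\delta(N_\F(T))$ is an \emph{enlargement}, not a restriction, and the restriction machinery you invoke (and Lemma~\ref{L:RestrictionLocality}) simply does not apply. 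The paper instead notes that $N_{N_\L(T)}(O_p(N_\F(T)))=N_\L(T)$ (by \cite[Proposition~5]{Henke:2015}), rewrites (a) as $\delta(N_\F(T))=\{P\leq S\colon PO_p(N_\F(T))\in\Delta\}$, and then applies \cite[Lemma~3.28]{Chermak/Henke}, which is precisely a tool for expanding the object set of a linking locality by adjoining all $P$ with $PR\in\Delta$ when $R$ is a normal $p$-subgroup. You would need a result of this type; the restriction formalism cannot substitute for it.
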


\begin{proof}
\textbf{(a)} By \cite[Lemma~7.13(c), Corollary~7.18]{Chermak/Henke}, $F^*(\F)=E(\E)*E(C_\F(\E))*\F_{O_p(\F)}(O_p(\F))$ and $E(C_\F(\E))=E(N_\F(T))$. Hence, using \cite[Lemma~2.16(c)]{Chermak/Henke}, we can conclude that
\begin{equation}\label{E:FstarF}
F^*(\F)=E(N_\F(T))*(E(\E)*\F_{O_p(\F)}(O_p(\F))).
\end{equation}
By \cite[Theorem~7.10(e)]{Chermak/Henke}, we have also
\begin{equation}\label{E:FstarNFT}
 F^*(N_\F(T))=E(N_\F(T))*\F_{O_p(N_\F(T))}(O_p(N_\F(T))).
\end{equation}
%The property \eqref{E:deltaFFusion} gives moreover $\delta(\F)=\{P\leq S\colon P\cap F^*(\F)\in F^*(\F)^s\}$ and similarly
%\[\delta(N_\F(T))=\{P\leq S\colon P\cap F^*(N_\F(T))\in F^*(N_\F(T))^s\}.\]
%\smallskip
Fix now $P\leq S$ with $O_p(N_\F(T))\leq P$.  Then $P\cap F^*(N_\F(T))=(P\cap E(N_\F(T)))O_p(N_\F(T))$ by \eqref{E:FstarNFT}. As $(S\cap E(\E))O_p(\F)\leq TO_p(\F)\leq O_p(N_\F(T))\leq P$, it follows moreover from \eqref{E:FstarF} that $P\cap F^*(\F)=(P\cap E(N_\F(T)))(S\cap E(\E))O_p(\F)$. Hence, we have the following equivalences:
\begin{eqnarray*}
 P\in\delta(\F) &\Longleftrightarrow & P\cap F^*(\F)\in F^*(\F)^s\;\;\mbox{(by \eqref{E:deltaFFusion})}\\
&\Longleftrightarrow & (P\cap E(N_\F(T)))(S\cap E(\E))O_p(\F)\in F^*(\F)^s\\
&\Longleftrightarrow & P\cap E(N_\F(T))\in E(N_\F(T))^s\;\;\mbox{(by \eqref{E:FstarF} and Lemma~\ref{E:F1F2})}\\
&\Longleftrightarrow & (P\cap E(N_\F(T)))O_p(N_\F(T))\in F^*(N_\F(T))^s\;\;\mbox{(by \eqref{E:FstarNFT} and Lemma~\ref{E:F1F2})}\\
&\Longleftrightarrow & P\cap F^*(N_\F(T))\in F^*(N_\F(T))^s\\
&\Longleftrightarrow & P\in \delta(N_\F(T))\;\;\mbox{(by \eqref{E:deltaFFusion})}.
\end{eqnarray*}
It follows from \cite[Lemma~10.6]{Henke:Regular} that $\delta(N_\F(T))=\{Q\leq S\colon QO_p(N_\F(T))\in\delta(N_\F(T))\}$. So (a) follows since the above equivalences hold for every subgroup $P\leq S$ with $O_p(N_\F(T))\leq P$.

\smallskip

\textbf{(b)} Let now $(\L,\Delta,S)$ be a regular locality over $\F$. By Lemma~\ref{L:NLT}, $(N_\L(T),\Delta,S)$ is a linking locality over $N_\F(T)$. In particular, by \cite[Proposition~5]{Henke:2015}, $N_{N_\L(T)}(O_p(N_\F(T)))=N_\L(T)$. As $\delta(\F)=\Delta$, part (a) gives $\delta(N_\F(T))=\{P\leq S\colon PO_p(N_\F(T))\in\Delta\}$. Now (b) follows from \cite[Lemma~3.28]{Chermak/Henke}.
\end{proof}

\section{The proof of Theorem~\ref{C:NK}}

%\textbf{Throughout this section let $(\L,\Delta,S)$ be a regular locality with product $\Pi\colon\D\rightarrow\L$.}

%\smallskip

The following lemma is needed in the proof of Theorem~\ref{C:NK}.

\begin{lemma}\label{E:3prime}
Let $(\L,\Delta,S)$ be a regular locality, $\N\unlhd\L$, $T:=\N\cap S$ and $T^*=F^*(\L)\cap S$. Then
\[N_\N(T)\subseteq N_\L(T^*).\]
\end{lemma}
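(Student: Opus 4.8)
I want to show that every element $g \in N_\N(T)$ normalizes $T^* = F^*(\L)\cap S$. The natural strategy is to reduce to a statement about normalizers of $p$-subgroups in the finite group $N_\L(T^*)$ (or rather, to recognize $N_\N(T)$ as living inside $N_\L(T^*)$), using that $T^*$ is strongly closed in $\F := \F_S(\L)$ and that $F^*(\L)$ plays the role of the generalized Fitting subsystem.

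First I would recall what $T^*$ is: since $F^*(\L)\unlhd\L$, the subgroup $T^* = F^*(\L)\cap S$ is strongly closed in $\F$, and in fact by \eqref{E:deltaFFusion} (or the regular-locality setup) $F^*(\L)$ corresponds to $F^*(\F)$, so $T^* = S\cap F^*(\F)$ and $\F_{T^*}(F^*(\L))$ is closely tied to $F^*(\F)$. The key group-theoretic fact I want to invoke is that in a linking locality, $N_\L(P) = N_\L(P')$ whenever $P' = PO_p(\cdot)$ type statements hold; more to the point, I expect to use a criterion of the form: an element $f$ with $S_f$ large enough normalizes $T^*$ iff conjugation by $f$ stabilizes $T^*$, which is automatic from strong closure of $T^*$ in $\F$ once we know $T^* \le S_f$.

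So the core of the argument should be: given $g \in N_\N(T)$, I first arrange that $g$ is defined "via" a subgroup containing $T^*$. This is where Theorem~\ref{P:Localities}-style decompositions or the Frattini/Splitting lemmas come in — but actually the cleaner route is: $g$ normalizes $T = \N\cap S$, and I'd like to show $T^* \le S_g$. Since $g\in\N$ and $\N\unlhd\L$, conjugation by $g$ is an $\F$-automorphism-ish map; because $T^*$ is strongly closed, $T^{*g} \le S$ wherever it is defined. The real content is showing $T^* \le S_g$, i.e., that $g$ is defined on all of $T^*$. Here I would use that $T^* \cap \N \le T$ (since $T^*\cap\N \le S\cap\N = T$), combined with the fact that $F^*(\L) = E(\L)O_p(\L)$ and that $N_\N(T)$ centralizes or normalizes enough of the components. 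Concretely: $[N_\N(T), E(C_\L(\text{something}))] $ considerations, using that $E(\L)$ decomposes as a product of components and $\N$ permutes/contains some of them — an element of $\N$ normalizing $T$ should normalize each component not inside $\N$ (centralizing it) and the components inside $\N$ are inside $T^*$ already on their $S$-part.

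**The main obstacle.** The hard part will be controlling the domain: proving $T^* \le S_g$ for $g\in N_\N(T)$, equivalently $(g, \text{stuff})\in\D$ via $T^*$. I'd attack this via \eqref{E:1}: it suffices to show $T^* \in\Delta = \delta(\F)$ lies in $S_g$, and since $S_g\in\Delta$ always, I need $T^*\le S_g$ as subgroups. I expect to get this from the structure $F^*(\L) = E(\L)O_p(\L)$: write $T^* = (S\cap E(\L))O_p(\L)$ (up to the central-product subtlety), note $O_p(\L)\le S_g$ automatically since $g\in\L$ and $O_p(\L)$ is normal, and handle $S\cap E(\L)$ component-by-component using that $g\in\N$ normalizes $T$ — components of $\L$ inside $\N$ contribute to $T$ hence are handled, while components not inside $\N$ are centralized by $\N$ (a standard $E(\L)$-vs-$\N$ argument, using that distinct components commute and $\N\cap\K$ is either $\K$ or central in $\K$ for a component $\K$). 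Once $T^* \le S_g$ is established, $g\in N_\L(T^*)$ follows because $T^{*g}$ is then defined, lies in $S$ by strong closure of $T^*$, and has the same order as $T^*$, forcing $T^{*g} = T^*$.
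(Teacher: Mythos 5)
Your proposal is correct and follows essentially the same route as the paper: decompose $T^*=(E(\L)\cap S)O_p(\L)$ and argue factor by factor, using that $N_\L(T)$ normalizes the part of $E(\L)\cap S$ coming from components inside $\N$ (namely $E(\N)\cap S=E(\N)\cap T$) and that $\N$ centralizes the remaining part. The paper packages your ``components not inside $\N$'' via the partial normal subgroup $\N^\perp=C_\L(\N)$ and the decomposition $E(\L)=E(\N)E(\N^\perp)$, which gives $T^*=(E(\N)\cap S)(E(\N^\perp)\cap S)O_p(\L)$ with all three factors visibly stabilized: $N_\N(T)\subseteq N_\L(T)\subseteq N_\L(E(\N)\cap S)$ since $E(\N)\unlhd\L$ and $E(\N)\cap S\leq T$; $N_\N(T)\subseteq\N\subseteq C_\L(\N^\perp)\subseteq C_\L(E(\N^\perp)\cap S)$; and $\L=N_\L(O_p(\L))$. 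The one variation in your plan is cosmetic: you aim first for $T^*\leq S_g$ and then invoke strong closure of $T^*$ to force $(T^*)^g=T^*$, while the paper directly shows $N_\N(T)$ normalizes each factor. Your strong-closure finish is valid, but note that the factor-by-factor work needed to get each piece inside $S_g$ (especially centralization of $E(\N^\perp)\cap S$) already yields the stronger normalization statements, so the strong-closure step ends up being redundant rather than a genuine shortcut. Also, your hedge ``up to the central-product subtlety'' is unnecessary; $T^*=(E(\L)\cap S)O_p(\L)$ holds outright by \cite[Lemma~11.9]{Henke:Regular}.
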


\begin{proof}
By \cite[Lemma~11.9]{Henke:Regular}, $F^*(\L)=E(\L)O_p(\L)$ and $T^*=(E(\L)\cap S)O_p(\L)$. In \cite[Notation~5.12]{Henke:Regular}, a partial normal subgroup $\N^\perp$ of $\L$ is defined, and it is shown in \cite[Lemma~11.16]{Henke:Regular} that $E(\L)=E(\N)E(\N^\perp)$. Moreover, it follows from \cite[Lemma~11.13]{Henke:Regular} that $E(\N)$ and $E(\N^\perp)$ are normal in $\L$. Hence, by \cite[Theorem~1]{Henke:2015a}, $E(\L)\cap S=(E(\N)\cap S)(E(\N^\perp)\cap S)$ and so
\[T^*=(E(\N)\cap S)(E(\N^\perp)\cap S)O_p(\L).\]
As $E(\N)\unlhd\L$ and $E(\N)\cap S=E(\N)\cap T$, we have moreover
\[N_\N(T)\subseteq N_\L(T)\subseteq N_\L(E(\N)\cap S).\]
It is shown in \cite[Theorem~10.16(e)]{Henke:Regular} that $\N^\perp=C_\L(\N)$.  Thus, \cite[Lemma~3.5]{Henke:Regular} yields $\N\subseteq C_\L(\N^\perp)$. In particular,
\[N_\N(T)\subseteq \N\subseteq C_\L(E(\N^\perp)\cap S).\]
It is moreover shown in \cite[Lemma~3.13]{Henke:Regular} that $\L=N_\L(O_p(\L))$. This implies the assertion.
\end{proof}

Let now $(\L,\Delta,S)$ be a regular locality and set $T^*:=S\cap F^*(\L)$. As $\Delta=\delta(\F)$ and $\F^s$ is overgroup-closed in $S$ by \cite[Proposition~3.3]{Henke:2015}, it follows from \eqref{E:deltaF} that, for every subgroup $P$ of $S$, we have $P\in\Delta$ if and only if $P\cap T^*=P\cap F^*(\L)\in\Delta$. In particular, it follows from \eqref{E:1} that for every $w\in\W(\L)$ the following equivalence holds:
\begin{equation}\label{E:SwcapT*}
S_w\cap T^*\in\Delta\mbox{ if and only if }w\in\D.
\end{equation}

\begin{proof}[Proof of Theorem~\ref{C:NK}]
Suppose the assertion is false, and let $(\L,\Delta,S,\N,\K)$ be a counterexample with $|\L|-|\K|$ minimal. Set $\F:=\F_S(\L)$ and $T:=S\cap \N$ so that $\K\subn N_\L(T)$. Write $\Pi\colon\D\rightarrow \L$ for the partial product on $\L$. Fix a subnormal series
\[\K=\K_0\unlhd\K_1\unlhd\cdots\unlhd \K_m:=N_\L(T)\]
of minimal length $m$ and set $\tK:=\K_{m-1}$.

\smallskip

By follows from \cite[Lemma~3.2]{Chermak:2015} that $\N\K=\K\N$. Hence, to obtain a contradiction, it is sufficient to prove that $\N\K$ is subnormal in $\L$ and $S\cap (\N\K)=T(S\cap\K)$.

\smallskip

Since $\tK\unlhd N_\L(T)$, it follows from Theorem~\ref{P:Localities} that $\N\tK\unlhd\L$. As $(\L,\Delta,S,\N,\K)$ is a counterexample, it follows in particular that $\K\neq\tK$ and thus $m\geq 2$. Moreover, by Theorem~\ref{T:RegularSubnormal}, $(\N\tK,\Gamma,S\cap (\N\tK))$ forms itself a regular locality for some appropriate set $\Gamma$ of subgroups of $S\cap (\N\tK)$.

\smallskip

Notice that $\N\unlhd\N\tK$. As $N_{\N\tK}(T)=\N\tK\cap N_\L(T)$ is a partial subgroup of $N_\L(T)$ containing $\K$, it follows moreover from \cite[Lemma~3.7(a)]{Henke:Regular} that $\K\subn N_{\N\tK}(T)$. Hence, if $\N\tK\neq \L$, then the minimality of $|\L|-|\K|$ yields that $(\N\tK,\Gamma,S\cap(\N\tK),\N,\K)$ is not a counterexample. This means that $S\cap (\N\K)=(S\cap(\N\tK))\cap (\N\K)=T(S\cap\K)$ and $\N\K\subn\N\tK\unlhd\L$. In particular, $\N\K$ is then subnormal in $\L$, so we obtain a contradiction to our assumption. Therefore we have shown that
\begin{equation}\label{E:R1}
\L=\N\tK.
\end{equation}
By \cite[Theorem~A]{Chermak/Henke}, $\F_T(\N)\unlhd\F$. Thus, Lemma~\ref{L:NLTRegular}(b) gives that $(N_\L(T),\delta(N_\F(T)),S)$ is a regular locality. As $\tK\unlhd N_\L(T)$, it follows thus from Theorem~\ref{T:RegularSubnormal} that $(\tK,\tGamma,S\cap\tK)$ forms a regular locality for some set $\tGamma$ of subgroups of $S\cap \tK$. Note that $\N\cap\tK\unlhd\tK$. Moreover, $T_0:=S\cap \N\cap\tK=T\cap\tK$. As $\tK\subseteq N_\L(T)$, it follows $\tK=N_{\tK}(T_0)$. In particular, $\K\subn N_{\tK}(T_0)$. The minimality of $m$ yields $\tK\neq N_\L(T)$ and thus $|\tK|<|\L|$. Hence, it follows from the minimality of $|\L|-|\K|$ that $(\tK,\tGamma,S\cap\tK,\N\cap\tK,\K)$ is not a counterexample. This means that
\[\K':=(\N\cap \tK)\K\subn\tK\mbox{ and }S\cap \K'=(S\cap\tK)\cap \K'=T_0(S\cap\K).\]
As $\tK\unlhd N_\L(T)$, we obtain therefore
\begin{equation}\label{E:R2}
\K'\subn N_\L(T)\mbox{ and }S\cap\K'=T_0(S\cap\K).
\end{equation}
  We show next that
\begin{equation}\label{E:R3}
 \N\K=\N\K'.
\end{equation}
Clearly, we have $\K\subseteq\K'$ and thus $\N\K\subseteq\N\K'$. Let now $n\in\N$ and $f\in\K'$ with $(n,f)\in\D$. We need to show that $nf\in\N\K$. As $f\in\K'=(\N\cap\tK)\K$, there exist $\tn\in\N\cap\tK$ and $k\in\K$ with $(\tn,k)\in\D$ and $f=\tn k$. It is indeed enough to show that $(n,\tn,k)\in\D$, since then the ``associativity axiom'' of partial groups yields $nf=n(\tn k)=\Pi(n,\tn,k)=(n\tn)k\in\N\K$ as required.

\smallskip

To prove $(n,\tn,k)\in\D$, we note first that, by \cite[Lemma~1.4(d)]{Chermak:2015} and the axioms of a partial group, $(\tn^{-1},\tn,k)\in\D$ and $k=\Pi(\tn^{-1},\tn,k)=\tn^{-1}(\tn k)=\tn^{-1}f$. Moreover, by Lemma~\ref{E:3prime}, we have $\tn\in\N\cap\tK\subseteq N_\N(T)\subseteq N_\L(T^*)$ and thus $S_f\cap T^*\leq S_{(\tn,\tn^{-1},f)}$. Using these properties along with \eqref{E:2}, we see that
\[S_{(\tn,k)}\cap T^*\leq S_f\cap T^*\leq S_{(\tn,\tn^{-1},f)}\cap T^*\leq S_{(\tn,\tn^{-1}f)}\cap T^*=S_{(\tn,k)}\cap T^*\]
and thus $S_{(\tn,k)}\cap T^*=S_f\cap T^*$. In particular, $S_{(n,f)}\cap T^*=S_{(n,\tn,k)}\cap T^*$. As $(n,f)\in\D$, it follows from \eqref{E:SwcapT*} first that $S_{(n,\tn,k)}\cap T^*=S_{(n,f)}\cap T^*\in\Delta$ and then $(n,\tn,k)\in\D$. As argued before this shows \eqref{E:R3}.

\smallskip

Assume now that $\K$ is properly contained in $\K'$. The minimality of $|\L|-|\K|$ yields then that $(\L,\Delta,S,\N,\K')$ is not a counterexample. As $\K'\subn N_\L(T)$  and $S\cap \K'=T_0(S\cap\K)$ by \eqref{E:R2}, it follows thus from \eqref{E:R3} that $\N\K=\N\K'\subn\L$ and $S\cap (\N\K)=S\cap (\N\K')=T(S\cap\K')=TT_0(S\cap\K)=T(S\cap\K)$.  This contradicts the assumption that $(\L,\Delta,S,\N,\K)$ is a counterexample. Hence, we have shown that $\K=\K'$ and thus
\begin{equation}\label{E:R4}
 \N\cap\tK\subseteq\K.
\end{equation}
Recall that $m\geq 2$ and thus it makes sense to consider $\K_{m-2}$. We show next that
\begin{equation}\label{E:R5}
 x^n\in\K_{m-2}\mbox{ for all }x\in\K_{m-2}\mbox{ and }n\in N_\N(T)\mbox{ with }(n^{-1},x,n)\in\D.
\end{equation}
For the proof fix $x\in\K_{m-2}$ and $n\in N_\N(T)$ with $(n^{-1},x,n)\in\D$. Using \cite[Lemma~3.2(a)]{Chermak:2015} (with $(n^{-1},x)$ in place of $(x,f)$), we observe that  $P:=S_{(n^{-1},x,n)}\leq S_{(n^{-1},x)}\leq S_x$ and thus $v:=(x^{-1},n^{-1},x,n)\in\D$ via $P^x$. As $\tK\unlhd N_\L(T)$ and $\N\unlhd \L$, we see then using \eqref{E:R4} that
\[x^{-1}x^n=\Pi(v)=(n^{-1})^xn\in\tK\cap \N\subseteq\K\subseteq\K_{m-2}.\]
As $(x^{-1},x^n)\in\D$, \cite[Lemma~1.4(d)]{Chermak:2015} allows us to conclude that $(x,x^{-1},x^n)\in\D$ and $x^n=x(x^{-1}x^n)\in \K_{m-2}$. This proves \eqref{E:R5}.

\smallskip

We will now be able to obtain a contradiction to the minimality of $m$ by showing
\begin{equation}\label{E:R6}
 \K_{m-2}\unlhd N_\L(T).
\end{equation}
For the proof fix $x\in \K_{m-2}$ and $g\in N_\L(T)$ with $(g^{-1},x,g)\in\D$. By \eqref{E:R1} and Theorem~\ref{P:Localities}, there exist $n\in\N$ and $k\in\tK$ with $(n,k)\in\D$, $g=nk$ and $S_g=S_{(n,k)}$. It follows then from \cite[Lemma~1.4(f)]{Chermak:2015} that $g^{-1}=k^{-1}n^{-1}$ and from \cite[Lemma~2.3(c), Proposition~2.5(b)]{Chermak:2015} that $S_{g^{-1}}=S_{(k^{-1},n^{-1})}$. Hence, $w:=(k^{-1},n^{-1},x,n,k)\in\D$ via $S_{(g^{-1},x,g)}$ and so
\[x^g=\Pi(g^{-1},x,g)=\Pi(w)=(x^n)^k.\]
As $T\leq S_g=S_{(n,k)}\leq S_n$ and $T$ is strongly closed, we have $n\in N_\N(T)$. Hence, it follows from \eqref{E:R5} that $x^n\in\K_{m-2}$. As $\K_{m-2}\unlhd \K_{m-1}=\tK$ and $k\in\tK$, we can conclude that $x^g=(x^n)^k\in\K_{m-2}$. This shows \eqref{E:R6}. As noticed before, this  contradicts the minimality of $m$.
\end{proof}

\section{Products in fusion systems}

%\textbf{Throughout this section let $\F$ be a saturated fusion system over $S$, and let $(\L,\Delta,S)$ be a linking locality over $\F$.}

\subsection{Reminder of some background}\label{SS:PsiL}

If $\F$ is a saturated fusion system, then as before, we  write $\fN(\F)$ for the set of normal subsystems of $\F$, and $\fS(\F)$ for the set of subnormal subsystems of $\F$. Similarly, if $\L$ is a partial group, then $\fN(\L)$ denotes the set of partial normal subgroups of $\L$, and $\fS(\L)$ denotes the set of partial subnormal subgroups of $\L$.

\smallskip

Let now $(\L,\Delta,S)$ be a linking locality over $\F$. Then by \cite[Theorem~A]{Chermak/Henke}, there is a bijection
\[\Psi_\L\colon\fN(\L)\rightarrow\fN(\F)\]
which sends a partial normal subgroup $\N$ to the smallest normal subsystem of $\F$ over $S\cap\N$ containing $\F_{S\cap\N}(\N)$. If $T\leq S$ is strongly $\F$-closed, then $(N_\L(T),\Delta,S)$ is a linking locality over $N_\F(T)$ by Lemma~\ref{L:NLT}. Thus, there exists a bijection
\[\Psi_{N_\L(T)}\colon N_\L(T)\rightarrow N_\F(T)\]
which sends a partial normal subgroup $\K$ of $N_\L(T)$ to the smallest normal subsystem over $S\cap\K$ containing $\F_{S\cap\K}(\K)$.

\smallskip

Suppose now that $(\L,\Delta,S)$ is a regular locality. Then the map $\Psi_\L$ from above is given by $\Psi_\L(\N)=\F_{S\cap\N}(\N)$. Moreover, by \cite[Theorem~F]{Chermak/Henke}, the bijection $\Psi_\L$ extends to a bijection
\[\hat{\Psi}_\L\colon\fS(\L)\rightarrow \fS(\F),\H\mapsto\F_{S\cap\H}(\H).\]
If $\E$ is a normal subsystem of $\F$ over $T$, we saw furthermore in Lemma~\ref{L:NLTRegular} that $(N_\L(T),\delta(N_\F(T)),S)$ is a regular locality over $N_\F(T)$. Hence, if a normal subsystem of $\F$ over $T$ exists, then the map $\Psi_{N_\L(T)}$ from above extends to the bijection
\[\hat{\Psi}_{N_\L(T)}\colon\fS(N_\L(T))\rightarrow \fS(N_\F(T)),\K\mapsto \F_{S\cap\K}(\K).\]

\subsection{The proof of Theorem~\ref{T:ED} and some related properties}

To prove Theorem~\ref{T:ED}, we will use the existence of a regular locality over $\F$ and then apply Theorems~\ref{P:Localities} and \ref{C:NK}. We need the following elementary lemma.

\begin{lemma}\label{L:NET}
Let $(\L,\Delta,S)$ be a locality, let $\H$ be a partial subgroup of $\L$, and set $\E_\H:=\F_{S\cap\H}(\H)$. Let $T\leq S\cap\H$ be such that $T$ is strongly $\E_\H$-closed. Then $S\cap\H=S\cap N_\H(T)$ and
\[N_{\E_\H}(T)=\F_{S\cap \H}(N_\H(T)).\]
\end{lemma}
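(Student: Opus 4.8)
The plan is to prove the two claims separately, the first being essentially group-theoretic and the second a short computation with conjugation maps.

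For the identity $S\cap\H=S\cap N_\H(T)$: one inclusion is trivial since $N_\H(T)\subseteq\H$. For the other, take $s\in S\cap\H$. We need $s\in N_\H(T)$, i.e. that conjugation by $s$ is defined on $T$ and maps $T$ into $T$ (inside the partial group $\H$). Since $s\in S$ and $T\leq S$, conjugation by $s$ in the locality is defined on all of $S$, hence on $T$, and $T^s\leq S$. Now $c_s\colon T\to T^s$ is an $\E_\H$-conjugation map (it is conjugation by an element of $\H$ between subgroups of $S\cap\H$), so strong $\E_\H$-closedness of $T$ forces $T^s\leq T$; applying the same to $s^{-1}$ gives $T^s=T$. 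Thus $s\in N_\H(T)$. The one subtlety to check here is that $s$ really lies in $N_\H(T)$ as a subset of the partial group, i.e. that the relevant products $(s^{-1},t,s)$ lie in $\mathbf{D}$ for $t\in T$; but this follows from \eqref{E:1} because $T\le S_s$ and $S_s\in\Delta$, so $(s^{-1},t,s)\in\mathbf D$ via $T$, and its product is $t^s\in T$.

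For the equality of fusion systems $N_{\E_\H}(T)=\F_{S\cap\H}(N_\H(T))$: first note that by the first part both sides are fusion systems over the same $p$-group $S\cap\H=S\cap N_\H(T)$, so it suffices to compare generating morphisms. The system $\F_{S\cap\H}(N_\H(T))$ is generated by conjugation maps $c_g\colon P\to Q$ with $g\in N_\H(T)$ and $P,Q\leq S\cap\H$. Any such $g$ normalizes $T$, so $c_g$ is a morphism of $\F_{S\cap\H}(\H)=\E_\H$ that additionally restricts to an automorphism of $T$ (namely $c_g|_T\in\Aut(T)$); by the definition of the normalizer fusion system $N_{\E_\H}(T)$, such a map is a morphism of $N_{\E_\H}(T)$. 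Hence $\F_{S\cap\H}(N_\H(T))\subseteq N_{\E_\H}(T)$. Conversely, $N_{\E_\H}(T)$ is generated by those $\E_\H$-morphisms that extend to a morphism also normalizing $T$; since every $\E_\H$-morphism is a composite of restrictions of conjugation maps by elements of $\H$, and since $T$ is strongly $\E_\H$-closed (so that any $\E_\H$-morphism automatically sends $T\cap(\text{domain})$ into $T$), one reduces to showing that a single conjugation map $c_g$, $g\in\H$, which together with a suitable extension normalizes $T$, is realized by an element of $N_\H(T)$. Here the natural move is to enlarge the domain: if $c_g$ extends to an $\E_\H$-morphism defined on a subgroup containing $T$ and normalizing $T$, then that extension is itself conjugation by some $h\in\H$ (using that $N_\H(T)$-conjugation realizes $\E_\H$ on subgroups containing $T$, or directly using the locality structure to absorb the extension into a single element $h$ with $T\le S_h$ and $T^h=T$), whence $h\in N_\H(T)$ and $c_g$ is a restriction of $c_h$, a morphism of $\F_{S\cap\H}(N_\H(T))$.

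The main obstacle is the reverse inclusion $N_{\E_\H}(T)\subseteq\F_{S\cap\H}(N_\H(T))$: one must pass from an abstract $\E_\H$-morphism that ``can be extended to normalize $T$'' to an honest conjugation map by an element of $N_\H(T)$. The clean way to handle this is to invoke the extension axiom / the structure of $N_{\E_\H}(T)$ together with the fact that in a locality every morphism of $\F_{S\cap\H}(\H)$ between subgroups of $S\cap\H$ that is the restriction of a morphism normalizing $T$ is realized on the subgroup it is defined on by conjugation by a single element $h\in\H$ with $T\le S_h$; strong $\E_\H$-closedness then gives $T^h=T$, i.e. $h\in N_\H(T)$. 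I expect the write-up to be short once one is careful that ``strongly $\E_\H$-closed'' is used exactly at the two points above: to force $T^s\le T$ in part one, and to guarantee that the extending element automatically stabilizes $T$ in part two.
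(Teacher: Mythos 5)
Your handling of the identity $S\cap\H = S\cap N_\H(T)$ and of the easy inclusion $\F_{S\cap\H}(N_\H(T))\subseteq N_{\E_\H}(T)$ is fine and essentially matches the paper. The gap is in the reverse inclusion, and it is a real one. You try to realize an $\E_\H$-morphism whose domain contains $T$ as conjugation by a \emph{single} element $h\in\H$ (``absorb the extension into a single element $h$''). There is no reason this should be possible: $\H$ is just an arbitrary partial subgroup of an arbitrary locality, $\E_\H=\F_{S\cap\H}(\H)$ is merely \emph{generated} by conjugation maps and its morphisms are in general honest composites, and the domains involved need not lie in $\Delta$, so nothing in the locality axioms lets you collapse a composite of conjugations into one. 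Worse, the statement you lean on (``$N_\H(T)$-conjugation realizes $\E_\H$ on subgroups containing $T$'') is precisely the conclusion you are trying to prove, and the appeal to an ``extension axiom'' is unavailable since the lemma carries no saturation hypothesis at all.

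The fix, which is what the paper does, is not to collapse the composite but to walk through it. Reduce, as you do, to a morphism $\phi$ of $N_{\E_\H}(T)$ defined on a subgroup containing $T$, and write it as $\phi=(c_{f_1}|_{P_1})\circ\cdots\circ(c_{f_k}|_{P_k})$ with $f_i\in\H$ and $P_i\leq S_{f_i}\cap\H$. Since $T\leq P_1$ and $T$ is strongly $\E_\H$-closed, $T^{f_1}\leq T$, whence $T^{f_1}=T$ and $f_1\in N_\H(T)$; this also gives $T\leq P_2$, and by induction every $f_i$ lies in $N_\H(T)$. Thus $\phi$ is a morphism of $\F_{S\cap\H}(N_\H(T))$. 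Strong closedness is used at each step of the chain, not just once to normalize a single extending element.
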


\begin{proof}
Observe first that $T\unlhd S\cap\H$ as $T$ is strongly $\E_\H$-closed. Thus, $N_{\E_\H}(T)$ is a fusion system over $S\cap\H=S\cap N_\H(T)$. Clearly $\F_{S\cap\H}(N_\H(T))\subseteq N_{\E_\H}(T)$. On the other hand, if we consider a morphism $\phi$ in $N_{\E_\H}(T)$, we can assume that $\phi$ is defined on $T$. By definition of $\E_\H$, $\phi=(c_{f_1}|_{P_1})\circ (c_{f_2}|_{P_2})\circ\cdots\circ (c_{f_k}|_{P_k})$ for some $f_1,f_2,\dots,f_k\in\H$ and $P_i\leq S_{f_i}\cap\H$ for $i=1,\dots,k$. Then $T\leq P_1$ and, as $T$ is strongly $\E_\H$-closed, it follows inductively that $T\leq P_i$ and $f_i\in N_\H(T)$ for $i=1,\dots,k$. So $\phi$ is a morphism in $\F_{S\cap\H}(N_\H(T))$ and the assertion holds.
\end{proof}

We restate and prove Theorem~\ref{T:ED} in Theorem~\ref{T:ED0} below. Apart from that, in Theorem~\ref{T:ED0}(e), we will give a concrete description of $\E\mD$ in the case that $\mD$ is normal in $N_\F(T)$. We use that, for a normal subsystem $\E$ of $\F$ and a subgroup $R\leq S$, a product subsystem $\E R$ is defined as follows.

\begin{definition}\label{D:Product}
Let $\E$ be a normal subsystem of $\F$ over $T\leq S$. For every $P\leq S$, set
\begin{eqnarray*}
\Ac_{\F,\E}(P):=\<\phi\in\Aut_\F(P)\colon \phi\mbox{ $p^\prime$-element},\;[P,\phi]\leq P\cap T\mbox{ and }\phi|_{P\cap T}\in\Aut_\E(P\cap T)\>.
\end{eqnarray*}
For every subgroup $R$ of $S$, define
\begin{eqnarray*}
\E R:=(\E R)_\F:=\<\Ac_{\F,\E}(P)\colon P\leq TR\mbox{ and }P\cap T\in\E^c\>_{TR}
\end{eqnarray*}
and call $\E R=(\E R)_\F$ the \emph{product} of $\E$ with $R$ (formed inside of $\F$).
\end{definition}

A product subsystem $\E R$ was first introduced by Aschbacher \cite[Chapter~8]{Aschbacher:2011}. The construction above was given by the author of this paper \cite{Henke:2013}. As shown in \cite[Example~7.4]{Henke:2013}, the product $\E R=(\E R)_\F$ depends actually not only on $\E$, $R$ and $S$, but also on $\F$.

\smallskip

Note that Theorem~\ref{T:ED} is implied by the following theorem.

\begin{theorem}\label{T:ED0}
Let $\F$ be a saturated fusion system over $S$, let $\E$ be a normal subsystem of $\F$ over $T$, and let $\mD$ be a subnormal subsystem of $N_\F(T)$ over $R$. Then there exists a subsystem $\E\mD=(\E\mD)_\F$ of $\F$ such that the following hold:
\begin{itemize}
 \item [(a)] $\E\mD$ is a subnormal subsystem of $\F$ over $TR$ with $\E\unlhd\E\mD$ and $\mD\subn N_{\E\mD}(T)$. If $\mD\unlhd N_\F(T)$, then $\E\mD\unlhd\F$ and $\mD\unlhd N_{\E\mD}(T)$.
 \item [(b)] $\E\mD$ is the smallest subnormal subsystem $\tE$ of $\F$ with $\E\unlhd\tE$ and $\mD\subn N_{\tE}(T)$. Indeed, if $\tE\subn\F$ with $\E\subn\tE$ and $\mD\subn N_{\tE}(T)$, then $\E\mD$ is a subnormal subsystem of $\tE$, $\E\unlhd\tE$ and $\mD\subn N_{\tE}(T)$. If $\mD\unlhd N_\F(T)$, then $\E\mD\unlhd\tE$ and $\mD\unlhd N_{\tE}(T)$.
 \item [(c)] $N_{\E\mD}(T)$ equals the product $(N_\E(T)\mD)_{N_\F(T)}$.
 \item [(d)] Let $(\L,\Delta,S)$ be a regular locality over $\F$, and suppose $\Psi_\L$ and $\hat{\Psi}_{N_\L(T)}$ are the maps from above (given by Theorem~A and Theorem~F in \cite{Chermak/Henke}). Setting $\N:=\Psi_\L^{-1}(\E)$ and $\K:=\hat{\Psi}_{N_\L(T)}^{-1}(\mD)$, we have then $TR=(\N\K)\cap S$ and $\Psi_\L(\N\K)=\E\mD$.
 \item [(e)] If $\mD\unlhd N_\F(T)$, then
  \[\E\mD=\<(\E R)_\F,(\mD T)_{N_\F(T)}\>\] is the subsystem over $TR$ which is generated by the automorphism groups $\Ac_{\F,\E}(P)$ where $P\leq TR$ with $P\cap T\in\E^c$ and the automorphism groups $\Ac_{N_\F(T),\mD}(Q)$ where $Q\leq TR$ with $Q\cap R\in\mD^c$.
\end{itemize}
\end{theorem}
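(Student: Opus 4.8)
The plan is to transfer the whole problem into a regular locality, apply Theorems~\ref{P:Localities} and~\ref{C:NK} there, and then read the statements back into $\F$ via the correspondence maps of Subsection~\ref{SS:PsiL}. First I would fix a regular locality $(\L,\Delta,S)$ over $\F$, which exists by \cite[Lemma~10.4]{Henke:Regular}, and set $\N:=\Psi_\L^{-1}(\E)$. Since $\E$ is a normal subsystem over $T$ and $\Psi_\L(\N)=\F_{S\cap\N}(\N)$, we get $S\cap\N=T$, so $T$ is strongly closed in $\F$ and Lemma~\ref{L:NLTRegular}(b) applies: $(N_\L(T),\delta(N_\F(T)),S)$ is a regular locality over $N_\F(T)$, and $\hat{\Psi}_{N_\L(T)}$ is a bijection $\fS(N_\L(T))\to\fS(N_\F(T))$. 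Put $\K:=\hat{\Psi}_{N_\L(T)}^{-1}(\mD)$, so $\K\subn N_\L(T)$ and $S\cap\K=R$. Now Theorem~\ref{C:NK} gives immediately that $\N\K=\K\N$ is a partial subnormal subgroup of $\L$ with $S\cap\N\K=T(S\cap\K)=TR$. Define $\E\mD:=\Psi_\L(\N\K)=\F_{TR}(\N\K)$ (legitimate because $\hat{\Psi}_\L$ is defined on $\fS(\L)$ and restricts to $\Psi_\L$). This establishes the existence statement and the object-group claim of part~(d), and also $\E\mD\subn\F$ in part~(a).

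For the ordering and minimality assertions in (a), (b), I would exploit that $\hat{\Psi}_\L$ is a bijection which is \emph{inclusion-preserving} in both directions. That $\E=\Psi_\L(\N)\unlhd\Psi_\L(\N\K)=\E\mD$ follows since $\N\unlhd\N\K$ (as $\N\unlhd\L$) and the correspondence sends normal-in-$\L$ to normal-in-the-image. To see $\mD\subn N_{\E\mD}(T)$: by Lemma~\ref{L:NET} applied to $\H=\N\K$, we have $N_{\E\mD}(T)=\F_{S\cap\N\K}(N_{\N\K}(T))$; and $N_{\N\K}(T)=\N\K\cap N_\L(T)\supseteq\K$, with $\K\subn N_{\N\K}(T)$ by \cite[Lemma~3.7(a)]{Henke:Regular} (exactly as in the proof of Theorem~\ref{C:NK}). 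Applying $\hat{\Psi}_{N_\L(T)}$ (or rather its analogue for the sublocality $N_{\N\K}(T)$, which is again regular by Theorem~\ref{T:RegularSubnormal}) turns $\K\subn N_{\N\K}(T)$ into $\mD\subn N_{\E\mD}(T)$. For minimality in (b): given $\tE\subn\F$ with $\E\subn\tE$ and $\mD\subn N_{\tE}(T)$, pull $\tE$ back to $\tN:=\hat{\Psi}_\L^{-1}(\tE)\subn\L$; since $\E\subn\tE$ forces $\N\subn\tN$, and since $\N\unlhd\L$ this even gives $\N\unlhd\tN$ (so $\E\unlhd\tE$), while $\mD\subn N_{\tE}(T)$ pulls back to $\K\subn N_{\tN}(T)$. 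Then $\N\K\subseteq\tN$ as partial subgroups, hence $\E\mD=\Psi_\L(\N\K)\subn\Psi_\L(\tN)=\tE$ by the subnormality-detecting property \cite[Lemma~3.7(a)]{Henke:Regular} transported through $\hat{\Psi}_\L$. The normal variant (when $\mD\unlhd N_\F(T)$) follows the same way: $\K\unlhd N_\L(T)$ by \cite[Theorem~A]{Chermak/Henke}, Theorem~\ref{P:Localities} gives $\N\K\unlhd\L$, and $\Psi_\L$ carries this to $\E\mD\unlhd\F$; $\mD\unlhd N_{\E\mD}(T)$ comes out of Lemma~\ref{L:NET} together with $\K\unlhd N_{\N\K}(T)$.

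Part~(c) is a compatibility check: $N_\E(T)\unlhd N_\F(T)$ (standard), $\mD\subn N_\F(T)=N_{N_\F(T)}(T)$, so $(N_\E(T)\mD)_{N_\F(T)}$ is defined by parts (a)--(b) applied inside $N_\F(T)$. On the locality side, Lemma~\ref{L:NET} gives $N_\E(T)=\Psi_{N_\L(T)}(N_\N(T))$ and $N_{\E\mD}(T)=\Psi_{N_\L(T)}(N_{\N\K}(T))$, and one checks $N_{\N\K}(T)=N_\N(T)\K$ directly from Theorem~\ref{P:Localities}(a) (the decomposition $g=nk$ with $S_g=S_{(n,k)}$ forces $n\in N_\N(T)$ when $g\in N_\L(T)$, exactly as in the proof of \eqref{E:R6}); then uniqueness from part~(b) applied in $N_\F(T)$ identifies $\Psi_{N_\L(T)}(N_\N(T)\K)$ with $(N_\E(T)\mD)_{N_\F(T)}$. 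Part~(d) is now just the assembled statement.

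The genuinely substantial part is (e), the concrete description when $\mD\unlhd N_\F(T)$. Here I would argue by double inclusion of the subsystem $\mathcal{G}:=\<(\E R)_\F,(\mD T)_{N_\F(T)}\>_{TR}$ and $\E\mD$. One inclusion: $(\E R)_\F\subseteq\E\mD$ because $\E\unlhd\E\mD$ over $T$ and $R\leq TR=S\cap\E\mD$, so $(\E R)_{\E\mD}$ makes sense and is contained in $\E\mD$; since $\Ac_{\F,\E}(P)$-type automorphisms for $P\leq TR$ with $P\cap T\in\E^c$ all lie in $N_{\E\mD}(\,\cdot\,)$-fusion (the defining generators of $(\E R)_\F$ are automorphisms of subgroups of $TR$ realizing $\E$-fusion up to a $p'$-piece, and these are visible inside $\F_{TR}(\N\K)$), they belong to $\E\mD$; similarly $(\mD T)_{N_\F(T)}\subseteq N_{\E\mD}(T)\subseteq\E\mD$ using part~(c) and Definition~\ref{D:Product} inside $N_\F(T)$. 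The reverse inclusion $\E\mD\subseteq\mathcal{G}$ is the hard step: one must show every conjugation map in $\F_{TR}(\N\K)$ on a subgroup of $TR$ is built from the listed automorphism groups. I would use Theorem~\ref{P:Localities}(a): each $g\in\N\K$ factors as $g=nk$ with $n\in\N$, $k\in\K$, $S_g=S_{(n,k)}$, so $c_g=c_n\circ c_k$ on the relevant subgroup; $c_n$ is an $\E$-map (since $\N$ corresponds to $\E$) and $c_k$ is an $N_\F(T)$-map inside $\mathcal{G}$ via the $\mD$-part. The delicate point is the bookkeeping of which subgroups $P$ with $P\cap T\in\E^c$ (resp.\ $Q\cap R\in\mD^c$) are needed and that $\mathcal{G}$, being generated over $TR$, is actually saturated and subnormal in $\F$ — for this I would invoke that $\mathcal{G}=\<(\E R)_\F,(\mD T)_{N_\F(T)}\>$ can itself be realized as $\Psi_\L$ of a partial subnormal subgroup built from $\N$ and $\K$, reducing (e) back to the locality-level product $\N\K$ and the equality $\N\K=\N\<N_\N(T),\K\>$-type identities; comparing generators of $(\E R)_\F$ with the images under $\Psi_\L$ of the corresponding $p'$-parts in $\L$ (using that $\N,\K$ are the full preimages) closes the loop. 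I expect the main obstacle to be precisely this last identification — matching the Aschbacher-style generator set in Definition~\ref{D:Product} with the fusion generated by the locality product $\N\K$ — and the cleanest route is to prove $\mathcal{G}$ satisfies the minimality property characterizing $\E\mD$ in part~(b), rather than comparing morphisms one at a time.
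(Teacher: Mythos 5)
Your treatment of parts (a)--(d) follows essentially the same route as the paper: pull everything back to a regular locality via $\Psi_\L$ and $\hat{\Psi}_{N_\L(T)}$, apply Theorems~\ref{P:Localities} and~\ref{C:NK} to form $\N\K$, define $\E\mD:=\hat{\Psi}_\L(\N\K)$, and then use the inclusion-preserving properties of the correspondence (via \cite[Proposition~7.1]{Chermak/Henke}) together with Lemma~\ref{L:NET}. Your alternative derivation of $N_{\N\K}(T)=N_\N(T)\K$ for (c) via the explicit $g=nk$ decomposition is fine; the paper instead just cites the Dedekind Lemma for partial groups \cite[Lemma~1.10]{Chermak:2015}, which is more direct but amounts to the same thing. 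Otherwise (a)--(d) match.

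For (e), however, there is a genuine gap, and you yourself flag it: you do not actually establish the identification between the Aschbacher-style generator set of $(\E R)_\F$ (and of $(\mD T)_{N_\F(T)}$) and the fusion arising from the locality product. Both inclusions you sketch rely on this. For instance, you claim $(\E R)_\F\subseteq\E\mD$ by noting that $(\E R)_{\E\mD}$ is defined and contained in $\E\mD$; but $(\E R)_{\E\mD}\subseteq(\E R)_\F$ is the inclusion that comes for free (a smaller ambient system gives smaller automizer groups $\Ac_{\cdot,\E}(P)$), not the reverse, so this argument does not give what you want. And on the reverse side you observe $c_g=c_n\circ c_k$ and assert that $c_n$ is ``an $\E$-map,'' but $c_n$ acts on subgroups of $TR$, not only of $T$, so one needs it to live in $(\E R)_\F$, not merely in $\E$. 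The missing tool is precisely \cite[Proposition~8.2]{Chermak/Henke}, which, for a regular locality, identifies $(\E R)_\F=\F_{TR}(\N R)$ and $(\mD T)_{N_\F(T)}=\F_{TR}(\K T)$. With that in hand, the inclusion $\<\F_{TR}(\N R),\F_{TR}(\K T)\>\subseteq\F_{TR}(\N\K)=\E\mD$ is immediate, and the reverse inclusion is exactly your factorization argument via Lemma~\ref{L:Decompose}. Your proposed fallback of characterizing $\mathcal{G}$ by the minimality in (b) also does not obviously work, since it would require first proving that $\mathcal{G}$ is a saturated subnormal subsystem of $\F$ with $\E\unlhd\mathcal{G}$ and $\mD\subn N_{\mathcal{G}}(T)$ --- facts not more accessible than the desired equality. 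So for (e) you correctly locate the crux but do not close it; the paper closes it with \cite[Proposition~8.2]{Chermak/Henke}.
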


As mentioned in the introduction, if $\E$ and $\mD$ are as in Theorem~\ref{T:ED0}, then $N_\E(T)\unlhd N_\F(T)$ and $\mD\subn N_\F(T)=N_{N_\F(T)}(T)$. Thus, by the first part of Theorem~\ref{T:ED0}, the product subsystem $(N_\E(T)\mD)_{N_\F(T)}$ is defined. Therefore, the statement in  Theorem~\ref{T:ED0}(c) makes sense.

\begin{proof}[Proof of Theorem~\ref{T:ED0}]
Let $(\L,\Delta,S)$ be a regular locality over $S$ (which exists by \cite[Lemma~10.4]{Henke:Regular}). Then $(N_\L(T),\delta(N_\F(T)),S)$ is a regular locality over $N_\F(T)$ by Lemma~\ref{L:NLTRegular}(b). We will consider the maps $\Psi_\L$, $\hat{\Psi}_\L$, $\Psi_{N_\L(T)}$ and $\hat{\Psi}_{N_\L(T)}$ given by \cite[Theorem~A, Theorem~F]{Chermak/Henke} (which we introduced in Subsection~\ref{SS:PsiL}). Set
\[\N:=\Psi_\L^{-1}(\E)\unlhd\L\mbox{ and }\K:=\hat{\Psi}_{N_\L(T)}^{-1}(\mD)\subn N_\L(T).\]
Note that $S\cap\K=R$ by definition of $\hat{\Psi}_{N_\L(T)}$. By Theorem~\ref{C:NK},
\[\N\K=\K\N\subn\L \mbox{ with }(\N\K)\cap S=T(S\cap\K)=TR.\]
Hence, 
\[\E\mD:=\hat{\Psi}_\L(\N\K)=\F_{TR}(\N\K)\]
is well-defined and a subnormal subsystem over $TR$.

\smallskip

\textbf{(a)} Note that $\N\unlhd\N\K$ as $\N\unlhd\L$ and $\N\subseteq\N\K$. Hence, it follows from \cite[Proposition~7.1(c)]{Chermak/Henke}, that
\[\E=\Psi_\L(\N)=\hat{\Psi}_\L(\N)\unlhd\hat{\Psi}_\L(\N\K)=\E\mD.\]
Notice $N_{\N\K}(T)=\N\K\cap N_\L(T)\subn N_\L(T)$ as $\N\K\subn\L$ and $N_{\N\K}(T)\cap S=(\N\K)\cap S=TR$. Moreover, $N_{\E\mD}(T)=\F_{TR}(N_{\N\K}(T))=\hat{\Psi}_{\N_\L(T)}(N_{\N\K}(T))$, where the first equality uses Lemma~\ref{L:NET} with $\N\K$ in place of $\H$. Hence, $\K\subseteq N_{\N\K}(T)$ implies by \cite[Proposition~7.1(c)]{Chermak/Henke} that $\mD=\hat{\Psi}_{N_\L(T)}(\K)\subn \hat{\Psi}_{N_\L(T)}(N_{\N\K}(T))=N_{\E\mD}(T)$. Hence (a) holds.

\smallskip

\textbf{(b)} Let $\tE\subn\F$ with $\E\subn\tE$ and $\mD\subn N_{\tE}(T)$. Recall that the bijections $\hat{\Psi}_\L$ and
and $\hat{\Psi}_{N_\L(T)}$ restrict to $\Psi_\L$ and $\Psi_{N_\L(T)}$ respectively. Set $\H:=\hat{\Psi}_\L^{-1}(\tE)$. Then $\H\subn\L$ and $\tE=\F_{S\cap\H}(\H)$. Observe that $N_\H(T)\subn N_\L(T)$, since a subnormal series $\H=\H_0\unlhd\H_1\unlhd\cdots\unlhd \H_n=\L$ of $\H$ in $\L$ leads to a subnormal series
\[N_\H(T)=\H_0\cap N_\L(T)\unlhd \H_1\cap N_\L(T)\unlhd\cdots\unlhd \H_n\cap N_\L(T)=N_\L(T).\]
Thus, by Lemma~\ref{L:NET},
\[N_{\tE}(T)=\F_{S\cap\H}(N_\H(T))=\hat{\Psi}_{N_\L(T)}(N_\H(T))\subn\N_\F(T).\]
As $\hat{\Psi}_\L(\N)=\Psi_\L(\N)=\E\subn\tE$ and $\hat{\Psi}_{N_\L(T)}(\K)=\mD\subn N_{\tE}(T)$ by assumption, it follow now from \cite[Proposition~7.1(b)]{Chermak/Henke} that $\N\subseteq \H$ and $\K\subseteq N_\H(T)$. In particular, $\N\K\subseteq\H$. Observe that indeed $\N\unlhd\H$, $\N\K\subn\H$ and $\K\subn N_\H(T)$ by \cite[Lemma~3.7(a)]{Henke:Regular}. Hence, it follows now from \cite[Proposition~7.1(c)]{Chermak/Henke} that $\E\unlhd\tE$, $\E\mD\subn\tE$ and $\mD\subn N_{\tE}(T)$.

\smallskip

If $\mD\unlhd N_\F(T)$, then $\K=\Psi_{N_\L(T)}^{-1}(\mD)\unlhd N_\L(T)$ and thus $\K\unlhd N_\H(T)$. Moreover, $\N\K\unlhd\L$ by Theorem~\ref{P:Localities} and so $\N\K\unlhd\H$. Hence, \cite[Proposition~7.1(c)]{Chermak/Henke} yields in this case that $\E\mD\unlhd\tE$ and $\mD\unlhd N_{\tE}(T)$.

\smallskip

\textbf{(d)} It follows from part (b) that the subsystem $\E\mD$ depends only on $\F$, $\E$ and $\mD$ and not on the choice of the regular locality $(\L,\Delta,S)$. Hence, (d) follows from the definition of $\E\mD$ above.

\smallskip

\textbf{(c)} By the Dedekind Lemma for partial groups \cite[Lemma~1.10]{Chermak:2015}, $N_{\N\K}(T)=N_\N(T)\K$. Notice moreover that $N_\N(T)\unlhd N_\L(T)$ and, by Lemma~\ref{L:NET} applied with $\N$ in place of $\H$, $N_\E(T)=\F_T(N_\N(T))=\Psi_{N_\L(T)}(N_\N(T))\unlhd N_\F(T)$. As seen in the proof of (a), we have $N_{\E\mD}(T)=\hat{\Psi}_{N_\L(T)}(N_{\N\K}(T))$. Hence, $N_{\E\mD}(T)=\hat{\Psi}_{N_\L(T)}(N_\N(T)\K)$. As $\K\subn N_\L(T)=N_{N_\L(T)}(T)$ and $(N_\L(T),\delta(N_\F(T)),S)$ is a regular locality over $N_\F(T)$, it follows thus from (d) that $N_{\E\mD}(T)=(N_\E(T)\mD)_{N_\F(T)}$.

\smallskip

\textbf{(e)} By definition of $(\E R)_\F$ and $(\mD T)_{N_\F(T)}$ (cf. Definition~\ref{D:Product}), it is for the proof of (e) sufficient to show $\E\mD=\<(\E R)_\F,(\mD T)_{N_\F(T)}\>$. As $(\L,\Delta,S)$ and $(N_\L(T),\delta(N_\F(T)),S)$ are regular localities over $\F$ and $N_\F(T)$ respectively, it follows from \cite[Proposition~8.2]{Chermak/Henke} that $(\E R)_\F=\F_{TR}(\N R)$ and $(\mD T)_{N_\F(T)}=\F_{TR}(\K T)$. Hence, we only need to show that $\E\mD=\<\F_{TR}(\N R),\F_{TR}(\K T)\>$. Recall that $\E\mD=\F_{TR}(\N\K)$, $T=S\cap\N$ and $R=S\cap \K$. So it is clear that $\<\F_{TR}(\N R),\F_{TR}(\K T)\>\subseteq\E\mD$. Moreover, since $\E\mD$ is generated by morphisms of the form $c_f|_{S_f\cap(\N\K)}$ with $f\in\N\K$, we just need to argue that each such morphism can be written as the composite of a morphism in $\F_{TR}(\N T)$ and a morphism in $\F_{TR}(\K R)$. By Lemma~\ref{L:Decompose}, for each $f\in\N\K$ there are elements $n\in\N$ and $k\in\K$ such that $(n,k)\in\D$, $S_f=S_{(n,k)}$ and $f=nk$, which implies that $c_f|_{S_f\cap (\N\K)}=(c_n|_{S_f\cap (\N\K)})\circ (c_k|_{S_f^n\cap (\N\K)})$ is of the required form.
\end{proof}

\begin{remark}\label{R:Example}
In \cite[Example~7.4]{Henke:2013} we constructed two saturated fusion systems $\F$ and $\G$ over the same $p$-group $S$ such that $\F\neq\G$, $\E:=O^p(\F)=O^p(\G)$, and $S$ is normal in $\F$ and $\G$. It is a consequence of \cite[Theorem~1]{Henke:2013} that $(\E S)_\F=(O^p(\F)S)_\F=\F$ and similarly $(\E S)_\G=\G$.

\smallskip

Fix now $T\leq S$ such that $\E$ is a fusion system over $T$. As $S$ is normal in $\F$ and $\G$, it follows in particular that $\mD:=\F_S(S)$ is normal in $N_\F(T)$ and $N_\G(T)$. Hence, by Theorem~\ref{T:ED0}(e), we have $(\E\mD)_\F=\<(\E S)_\F,\F_S(S)\>=(\E S)_\F=\F$ and similarly $(\E\mD)_\G=\G$. This shows that the product $(\E\mD)_\F$ depends actually not only on $\E$ and $\mD$, but also on $\F$.
\end{remark}

\subsection{The proof of Corollary~\ref{C:EDNormalLocalities}}

\begin{lemma}\label{L:RestrictionPsiL}
 Let $(\L,\Delta,S)$ and $(\L^+,\Delta^+,S)$ be linking localities over $\F$ with $\L^+|_\Delta=\L$. Let $\E$ be a normal subsystem of $\F$ over $T$, and let $\mD$ be a normal subsystem of $N_\F(T)$ over $R$. Set $\N:=\Psi_\L^{-1}(\E)$, $\K:=\Psi_{N_\L(T)}^{-1}(\mD)$, $\N^+:=\Psi_{\L^+}^{-1}(\E)$ and $\K^+:=\Psi_{N_{\L^+}(T)}^{-1}(\mD)$. Then
 \[\Psi_{\L^+}(\N^+\K^+)=\Psi_\L(\N\K).\]
\end{lemma}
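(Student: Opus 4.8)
The plan is to relate the two product subgroups through the restriction functor, using Lemma~\ref{L:RestrictionLocality} as the computational heart and the compatibility of the $\Psi$-maps with passing between linking localities of different object sets. First I would recall that, by the results quoted in Subsection~\ref{SS:PsiL} (i.e. \cite[Theorem~A]{Chermak/Henke}), the normal subsystems $\E$ and $\mD$ determine $\N^+\unlhd\L^+$, $\N\unlhd\L$, $\K^+\unlhd N_{\L^+}(T)$ and $\K\unlhd N_\L(T)$ uniquely, and that the restriction relation $\L^+|_\Delta=\L$ is compatible with normalizers of the strongly closed subgroup $T$: namely $N_{\L^+}(T)|_\Delta = N_{\L^+}(T)\cap\L = N_\L(T)$ by \cite[Lemma~2.23(b)]{Henke:2020} (this is exactly the identity already used inside the proof of Lemma~\ref{L:RestrictionLocality}). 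The key structural input is that $\N^+\cap\L = \N$ and $\K^+\cap\L = \K$. For $\N$ this should follow because $\Psi_\L$ and $\Psi_{\L^+}$ agree on the common subsystem $\E$ in the sense that the preimage of $\E$ in the smaller locality is the intersection of the preimage in the larger locality with $\L$; concretely, $\N^+\cap\L$ is a partial normal subgroup of $\L$ with $S\cap(\N^+\cap\L)=S\cap\N^+=T=S\cap\N$ and $\F_T(\N^+\cap\L)$ generating the same normal subsystem $\E$, so by injectivity of $\Psi_\L$ we get $\N^+\cap\L=\N$. The same argument applied inside the linking localities $N_{\L^+}(T)$ and $N_\L(T)=N_{\L^+}(T)|_\Delta$ gives $\K^+\cap\L=\K$.

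With these identifications in hand, Lemma~\ref{L:RestrictionLocality} applies verbatim to $\N^+\unlhd\L^+$ and $\K^+\unlhd N_{\L^+}(T)$ and yields
\[
\Pi^+(\N^+,\K^+)\cap\L=\Pi(\N,\K),
\]
i.e. $(\N^+\K^+)\cap\L = \N\K$. Then I would invoke Theorem~\ref{P:Localities} to know that $\N^+\K^+$ is a partial normal subgroup of $\L^+$ (and $\N\K$ one of $\L$), so both sides of the desired equality make sense. The final step is to identify $\Psi_{\L^+}(\N^+\K^+)$ and $\Psi_\L(\N\K)$: since $\N^+\K^+$ is a partial normal subgroup of $\L^+$ whose intersection with $\L$ is $\N\K$, and since both localities lie over the same fusion system $\F$ with the restriction isomorphism fixing $S$, the normal subsystem of $\F$ that $\N^+\K^+$ generates via $\Psi_{\L^+}$ must coincide with the one $\N\K$ generates via $\Psi_\L$. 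Concretely one checks that $S\cap(\N^+\K^+)=S\cap\N\K$ (both equal $T(S\cap\K)=T(S\cap\K^+)$ by the formula in Theorem~\ref{P:Localities}, using $S\cap\K=S\cap\K^+$ which follows from $\K=\K^+\cap\L$ and $S\subseteq\L$), and that the fusion generated by $\N^+\K^+$ inside $\F$ agrees with that generated by $\N\K$ because every conjugation map by an element of $\N^+\K^+$ relevant for the fusion system over $T(S\cap\K)$ is, by part (a) of Theorem~\ref{P:Localities} together with \eqref{E:1}, already realised by an element of $(\N^+\K^+)\cap\L = \N\K$.

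I expect the main obstacle to be the bookkeeping around the two different uses of $\Psi$: one must be careful that $\Psi_{\L^+}$ restricted appropriately really does compute the same normal subsystem as $\Psi_\L$, i.e. that the smallest-normal-subsystem-containing-$\F_{S\cap\cdot}(\cdot)$ construction behaves well under $\L^+\rightsquigarrow\L^+|_\Delta$. This should either be quotable from \cite{Chermak/Henke} (the statement that $\Psi_\L$ does not depend on the object set, already implicit in the discussion preceding Theorem~\ref{P:Localities} of the excerpt) or provable directly from the characterisation of $\Psi$ via $\F^{cr}\subseteq\Delta$ and the fact that $\N^+$ and $\N^+\cap\L$ contain the same elements normalising members of $\F^{cr}$. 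The decompositions $\N^+\cap\L=\N$, $\K^+\cap\L=\K$, $S\cap\K^+=S\cap\K$ are then routine, and the cited Lemma~\ref{L:RestrictionLocality} does the remaining work.
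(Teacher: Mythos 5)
Your approach matches the paper's: establish $\N^+\cap\L=\N$ and $\K^+\cap\L=\K$, apply Lemma~\ref{L:RestrictionLocality} to get $(\N^+\K^+)\cap\L=\N\K$, and then identify the two images under $\Psi$. The one place that is not nailed down is precisely the ``compatibility of the $\Psi$-maps with restriction'' that you flag at the end. This is not merely bookkeeping: the fact that $\Psi_{\L^+}=\Psi_\L\circ\Phi_{\L^+,\L}$ (where $\Phi_{\L^+,\L}\colon\fN(\L^+)\to\fN(\L)$, $\M^+\mapsto\M^+\cap\L$), and the analogous statement for $N_{\L^+}(T)$ and $N_\L(T)$, is exactly \cite[Theorem~5.14(c)]{Chermak/Henke}, and the paper cites it both to get $\N^+\cap\L=\N$, $\K^+\cap\L=\K$ and to convert $(\N^+\K^+)\cap\L=\N\K$ into $\Psi_{\L^+}(\N^+\K^+)=\Psi_\L(\N\K)$. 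Your first attempted shortcut, namely showing $\F_T(\N^+\cap\L)$ ``generates the same normal subsystem $\E$'' and then invoking injectivity of $\Psi_\L$, is really the same assertion you are trying to establish, so it needs the citation rather than replacing it.

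Your second attempted shortcut has a genuine flaw. You claim that every conjugation map by an element of $\N^+\K^+$ on a subgroup of $TR$ is ``already realised by an element of $(\N^+\K^+)\cap\L=\N\K$, by part (a) of Theorem~\ref{P:Localities} together with \eqref{E:1}.'' That does not follow: an element $f\in\N^+\K^+$ may have $S_f\in\Delta^+\setminus\Delta$, so $f\notin\L$, and writing $f=nk$ with $S_f=S_{(n,k)}$ via Theorem~\ref{P:Localities}(a) gives $n,k\in\L^+$ with $S_n,S_k\supseteq S_f$, which again need not lie in $\Delta$. In fact one should not expect $\F_{S\cap\M^+}(\M^+)=\F_{S\cap\M}(\M)$ for $\M=\M^+\cap\L$ in general when $\Delta$ is a proper subset of $\Delta^+$; the two are only guaranteed to generate the same smallest \emph{normal} subsystem of $\F$, which is precisely what $\Psi_{\L^+}=\Psi_\L\circ\Phi_{\L^+,\L}$ asserts. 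So replace that hand argument with the citation \cite[Theorem~5.14(c)]{Chermak/Henke}, and the rest of your proof goes through exactly as in the paper.
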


\begin{proof}
It follows from \cite[Lemma~2.23(b)]{Henke:2020} that $N_{\L^+}(T)\cap\L=N_{\L^+}(T)|_\Delta=N_\L(T)$.
Define
\[\Phi_{\L^+,\L}\colon \fN(\L^+)\rightarrow \fN(\L),\;\M^+\mapsto \M^+\cap\L\]
and similarly
\[\Phi_{N_{\L^+}(T),N_\L(T)}\colon\fN(N_{\L^+}(T))\rightarrow \fN(N_\L(T)),\M^+\mapsto\M^+\cap N_\L(T).\]
By \cite[Theorem~5.14(c)]{Chermak/Henke}, we have $\Psi_{\L^+}=\Psi_\L\circ\Phi_{\L^+,\L}$ and $\Psi_{N_{\L^+}(T)}=\Psi_{N_\L(T)}\circ \Phi_{N_{\L^+}(T),N_\L(T)}$. Hence, $\N^+\cap\L=\Phi_{\L^+,\L}(\N^+)=\Psi_\L^{-1}(\Psi_{\L^+}(\N^+))=\Psi_\L^{-1}(\E)=\N$ and $\K^+\cap \L=\K^+\cap N_\L(T)=\Phi_{N_{\L^+}(T),N_\L(T)}(\K^+)=\Psi_{N_\L(T)}^{-1}(\Psi_{N_{\L^+}(T)}(\K^+))=\Psi_{N_\L(T)}^{-1}(\mD)=\K$. Hence, by Lemma~\ref{L:RestrictionLocality}, we have $\Phi_{\L^+,\L}(\N^+\K^+)=\N^+\K^+\cap\L=\N\K$ and thus $\Psi_\L(\N\K)=\Psi_{\L^+}(\N^+\K^+)$. 
\end{proof}

\begin{proof}[Proof of Corollary~\ref{C:EDNormalLocalities}]
Let $(\L,\Delta,S)$ be a linking locality. By \cite[Proposition~3.3, Theorem~7.2]{Henke:2015}, there exists a linking locality $(\L^s,\F^s,S)$ over $\F$ with $\L^s|_\Delta=\L$. By \cite[Lemma~10.4]{Henke:Regular},  $\F^{cr}\subseteq\delta(\F)\subseteq \F^s$ and $\delta(\F)$ is closed under $\F$-conjugacy and overgroup-closed in $S$. Hence, $\L_\delta:=\L^s|_{\delta(\F)}$ is well-defined and a locality with object set $\delta(\F)$. As $\L^s$ is a linking locality and $N_{\L_\delta}(P)=N_{\L^s}(P)$ for all $P\in\delta(\F)$, it follows that $(\L_\delta,\delta(\F),S)$ is a linking locality and thus a regular locality.

\smallskip

Let now $\N:=\Psi_\L^{-1}(\E)$, $\N^s:=\Psi_{\L^s}^{-1}(\E)$, $\N_\delta=\Psi_{\L_\delta}^{-1}(\E)$, $\K:=\Psi_{N_\L(T)}^{-1}(\mD)$, $\K^s:=\Psi_{N_{\L^s}(T)}^{-1}(\mD)$, $\K_\delta:=\Psi_{N_{\L_\delta}(T)}^{-1}(\mD)$. Then in particular $\N\cap S=T$ and $\K\cap S=R$. Note that $\N\K\unlhd\L$, $(\N\K)\cap S=TR$, $\N^s\K^s\unlhd\L^s$ and $\N_\delta\K_\delta\unlhd\L_\delta$ by Theorem~\ref{P:Localities}. By Theorem~\ref{T:ED0}(d), we have $\Psi_{\L_\delta}(\N_\delta\K_\delta)=\E\mD$. Hence, applying Lemma~\ref{L:RestrictionPsiL} twice, we obtain $\Psi_\L(\N\K)=\Psi_{\L^s}(\N^s\K^s)=\Psi_{\L_\delta}(\N_\delta\K_\delta)=\E\mD$. This proves the assertion.
\end{proof}

\bibliographystyle{amsalpha}
\bibliography{repcoh}

\newcommand{\etalchar}[1]{$^{#1}$}
\providecommand{\bysame}{\leavevmode\hbox to3em{\hrulefill}\thinspace}
\providecommand{\MR}{\relax\ifhmode\unskip\space\fi MR }
% \MRhref is called by the amsart/book/proc definition of \MR.
\providecommand{\MRhref}[2]{%
  \href{http://www.ams.org/mathscinet-getitem?mr=#1}{#2}
}
\providecommand{\href}[2]{#2}
\begin{thebibliography}{BCG{\etalchar{+}}05}

\bibitem[AKO11]{Aschbacher/Kessar/Oliver:2011}
M.~Aschbacher, R.~Kessar, and B.~Oliver, \emph{{Fusion systems in algebra and
  topology}}, London Math.\ Soc.\ Lecture Note Series, vol. 391, Cambridge
  University Press, 2011.

\bibitem[Asc08]{Aschbacher:2008}
M.~Aschbacher, \emph{Normal subsystems of fusion systems}, Proc. Lond. Math.
  Soc. (3) \textbf{97} (2008), no.~1, 239--271.

\bibitem[Asc11]{Aschbacher:2011}
\bysame, \emph{The generalized {F}itting subsystem of a fusion system}, Mem.
  Amer. Math. Soc. \textbf{209} (2011), no.~986, vi+110.

\bibitem[BCG{\etalchar{+}}05]{BCGLO1}
C.~Broto, N.~Castellana, J.~Grodal, R.~Levi, and B.~Oliver, \emph{Subgroup
  families controlling {$p$}-local finite groups}, Proc. London Math. Soc. (3)
  \textbf{91} (2005), no.~2, 325--354.

\bibitem[BLO03]{BLO2}
C.~Broto, R.~Levi, and B.~Oliver, \emph{The homotopy theory of fusion systems},
  J. Amer. Math. Soc. \textbf{16} (2003), no.~4, 779--856.

\bibitem[CH21]{Chermak/Henke}
A.~Chermak and E.~Henke, \emph{Fusion systems and localities -- a dictionary},
  arXiv:1706.05343v3 (2021).

\bibitem[Che13]{Chermak:2013}
A.~Chermak, \emph{Fusion systems and localities}, Acta Math. \textbf{211}
  (2013), no.~1, 47--139.

\bibitem[Che17]{ChermakIII}
\bysame, \emph{{Finite Localities III}}, arXiv:1505.06161v2 (2017).

\bibitem[Che21a]{Chermak:2015}
\bysame, \emph{{Finite Localities I}}, arXiv:1505.07786v4 (2021).

\bibitem[Che21b]{ChermakII}
\bysame, \emph{{Finite Localities II}}, arXiv:1505.08110v3 (2021).

\bibitem[GL16]{Glauberman/Lynd}
George Glauberman and Justin Lynd, \emph{{Control of fixed points and existence
  and uniqueness of centric linking systems}}, Invent. Math. (2016), 1--44,
  Published online first: March 19 2016, doi:10.1007/s00222-016-0657-5.

\bibitem[Hen13]{Henke:2013}
E.~Henke, \emph{Products in fusion systems}, J. Algebra \textbf{376} (2013),
  300--319.

\bibitem[Hen15]{Henke:2015a}
\bysame, \emph{Products of partial normal subgroups}, Pacific J. Math.
  \textbf{279} (2015), no.~1-2, 255--268.

\bibitem[Hen19]{Henke:2015}
\bysame, \emph{Subcentric linking systems}, Trans. Amer. Math. Soc.
  \textbf{371} (2019), no.~5, 3325--3373.

\bibitem[Hen20]{Henke:2020}
\bysame, \emph{Extensions of homomorphisms between localities},
  arXiv:2006.12626 (2020).

\bibitem[Hen21]{Henke:Regular}
\bysame, \emph{Commuting partial normal subgroups and regular localities},
  arXiv:2103.00955v3 (2021).

\bibitem[Oli10]{O4}
B.~Oliver, \emph{Extensions of linking systems and fusion systems}, Trans.
  Amer. Math. Soc. \textbf{362} (2010), no.~10, 5483--5500.

\bibitem[Oli13]{Oliver:2013}
\bysame, \emph{Existence and uniqueness of linking systems: {C}hermak's proof
  via obstruction theory}, Acta Math. \textbf{211} (2013), no.~1, 141--175.

\bibitem[OV07]{OV1}
B.~Oliver and J.~Ventura, \emph{Extensions of linking systems with {$p$}-group
  kernel}, Math. Ann. \textbf{338} (2007), no.~4, 983--1043.

\end{thebibliography}

\end{document}